\documentclass{amsart}
\usepackage[utf8]{inputenc}
\usepackage{amsfonts,amssymb,amsmath,amsthm}
\usepackage{hyperref}
\usepackage[Conny]{fncychap}
\usepackage{spreadtab} % cette commande gere l'addition verticale avec 3 termes
\usepackage{xcolor}

\usepackage[T1, OT1]{fontenc}
\theoremstyle{plain}
\newtheorem{theorem}{\textbf{Theorem}}[section]

\newtheorem{proposition}{\textbf{Proposition}}[section]
\newtheorem{lemma}{\textbf{Lemma}}[section]
\newtheorem{corollary}{\textbf{Corollary}}[section]

\theoremstyle{definition}
\newtheorem{definition}{\textbf{Definition}}[section]
\theoremstyle{remark}

\newtheorem{example}{\textbf{Example}}[section]

\numberwithin{equation}{section}

\def\ZZ{\mathbb{Z}\setminus\{0\}}
\def\NN{\mathbb{N}\setminus\{0\}}

\def\(Z/nZ)(Z/nZ){\mathbb{(Z/nZ)}\setminus\{0, 1\}}

\def\deg{deg}

\def\Q{\mathbb{Q}}
\def\C{\mathbb{C}}
\def\F{\mathbb{F}}

\def\Z{\mathbb{Z}}
\def\R{\mathbb{R}}

\def\G{\mathcal{G}}

\def\bX{\mathbf{X}}

\def\bb{\mathbf{b}}

\def\bx{\mathbf{x}}
\def\bd{\mathbf{d}}

\def\bu{\mathbf{u}}
\def\bv{\mathbf{v}}
\def\bw{\mathbf{w}}

\DeclareMathOperator{\ord}{ord}
\DeclareMathOperator{\Nr}{N}
\DeclareMathOperator{\Tr}{Tr}

\begin{document}
% \tableofcontents
%\cite{Lamport}
%\bibliographystyle{plain}
%\bibliography{mabiblio,bibliofac, commun}

\title[]{The $p$-powers dividing certain exponential sums}

\author{Antonine Phigareau}

\address{D{\'e}partment  de Math{\'e}matiques et Statistique,   Universit{\'e} de
Montr{\'e}al, CP 6128 succ Centre-Ville, Montr{\'e}al, QC  H3C 3J7, Canada.}

\email{phigareauantonine@gmail.com} 
%\date{}

\begin{abstract} 

We define the notion of couple density $(D, \bb)$ where $D$ is a non-empty subset of $\Z^{m}$ and $\bb$ a fixed element in $\{0, \cdots, q-2\}^{m};$
We determine a minimum in terms of the density of the couple $(D, \bb)$ for the $q$-adic valuation of the sum $ S_{\ell}(F,\bb)$ with $F$ a Laurent polynomial. And we show that this minimum is a bound for the $q$-adic valuation of the zeros and poles of the associated $L$-function.
\end{abstract}

\maketitle

\textbf{Keywords}

\noindent Finite fields, exponential sums, mixed characters, $L$-functions, divisibility, $p$-adic valuation, sequences of sums.

\hfill

\noindent \textbf{Classification}

$ 11 \text{T}23, ~ 11 \text{T}24, ~ 11 \text{M}38  $

\hfill

\noindent\textbf{Thanks}

\noindent I thank Mr. Régis Blache and Professor Andrew Granville for their support in the preliminary to this article.

\section{Introduction}

\noindent Given a prime number $p$ and  an integer $f\geq 1$, let $ \F_{q}$ be the finite field with $q=p^{f}$ elements, and let $\F_{q^{\ell}}$ be the field extension of degree $\ell$ inside  a fixed algebraic closure, for each  $ \ell \geq 1$. Let $\psi$ denote a non-trivial additive character  of $ \F_{q}$ and let $\omega$ be the Teichm\"uller character of $ \F_{q}$ in $\Q_{p}(\zeta_{q-1})$. 
Let 
\[  \pi=\psi(1)-1; \]
it is known  that $p$ and $\pi^{p-1}$ are associates in $\Z_{p}$.

Given $D:=\{\bd_{i}\}_{1\leq i \leq n}\subset \Z^{m}$ with $\bd_{i}=(d_{i1},\ldots, d_{ im})$, let 
\[
 F(\bx):=\sum^{n}_{i=1}{a}_{i}\bx^{\bd_{i}} \in  \F_{q}[x_{1}, x^{-1}_{1},\ldots,x_{m}, x^{-1}_{m}] 
 \]
 be a Laurent polynomial in $ m $ variables,  where  we define 
 \[
  \bx^{\bd}=x_1^{d_{1}}\ldots x_m^{d_{m}} \text{ for } \bx=(x_{1},\ldots, x_{m}) \text{ and }\bd=(d_{1},\ldots, d_{m}).
  \]
For each  $\bb=(b_{1}, \ldots, b_{m})\in \{0, \ldots, q-2\}^{m}$  such that $\bx^{\bb}=x_1^{b_{1}}\ldots x_m^{b_{m}},$  we define the character sum
\[
 S_{\ell}(F,\bb)= \sum_{\bx \in (\F^{\times}_{q^{\ell}})^{m}}\omega \big(\Nr_{\F_{q^{\ell}}/ \F_{q}}(\bx^{\bb})\big)\psi\big(\Tr_{\F_{q^{\ell}}/ \F_{q}}(F(\bx))\big)
 \]
  and the $L$-function 
\[
L(F,\bb, T)=\exp \Bigg (\sum_{\ell\geq 1}S_{\ell}(F,\bb)\frac{T^{\ell}}{\ell }\Bigg)
\]
which  is known to be a rational fraction in $T$ \cite{B66} of the form
\[
 L(F,\bb, T) =\frac{\prod^{s}_{i=1}(1-\beta_{i}(F, \bb)T)}{\prod^{t }_{j=1}(1-\alpha_{j}(F, \bb)T)}
 \]
for $\alpha_{j}(F,\bb), \beta_{i}(F, \bb)\in \C$.
We will determine a lower bound for the $q$-adic valuations of the poles and roots of this rational $L$-function. 

\subsection*{Results in context} 

In 1935 C. Chevalley proved Artin's conjecture that if  $F(\bx)$ is   homogeneous in $m$ variables  of total degree $ d$ where $d<m $ then $F$ has a non-trivial zero.
Chevalley subsequently relaxed  the homogeneity assumption and then E. Warning showed that $p$, the   characteristic of the field, divides the number of zeros of $F(\bx)$ specifically:

\begin{theorem}[Chevalley-Warning Theorem]
Suppose that $F_{1},\cdots,F_{r}\in \F_{q}[X_{1},\cdots,X_{m}]$ where $d_i=\deg(F_{i})$.
If  $\sum^{r}_{i=1} d_i< m$ then $p$ divides $\#V$ where
$$ V:= \{ \bx \in \F^{m}_{q}: F_1(\bx)=\cdots = F_r(\bx)=0 \}.$$.
\end{theorem}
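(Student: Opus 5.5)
The plan is the classical polynomial-method argument: express $\#V$ modulo $p$ as a sum over all of $\F_q^{m}$ of an indicator polynomial, then show this sum vanishes in $\F_q$ by a degree count.

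\textbf{Step 1: an indicator polynomial.} For $y\in\F_q$ we have $y^{q-1}=1$ if $y\neq 0$ and $0^{q-1}=0$. Hence
\[
P(\bx):=\prod_{i=1}^{r}\bigl(1-F_i(\bx)^{q-1}\bigr)
\]
equals $1$ when $\bx\in V$ and $0$ otherwise. Therefore, as elements of $\F_q$,
\[
\#V \cdot 1_{\F_q}=\sum_{\bx\in\F_q^{m}}P(\bx).
\]
It suffices to show the right-hand side is $0$ in $\F_q$, because $\#V\cdot 1_{\F_q}=0$ exactly when $p\mid \#V$.

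\textbf{Step 2: power sums over $\F_q$.} For an integer $k\ge 0$, set $\sigma_k=\sum_{x\in\F_q}x^{k}$, with the convention $0^0=1$. I claim $\sigma_k=0$ unless $k\ge 1$ and $(q-1)\mid k$.
\begin{itemize}
\item For $k=0$ the sum is $q\cdot 1=0$.
\item For $k\ge1$ with $(q-1)\nmid k$, choose a generator $g$ of $\F_q^\times$. Then $g^{k}\neq1$, and substituting $x\mapsto gx$ gives $\sigma_k=g^k\sigma_k$, so $\sigma_k=0$.
\end{itemize}
Consequently, for a monomial $\bx^{\mathbf{e}}=x_1^{e_1}\cdots x_m^{e_m}$,
\[
\sum_{\bx\in\F_q^m}\bx^{\mathbf{e}}=\prod_{j}\sigma_{e_j}.
\]
This product is nonzero only if every $e_j\ge q-1$, which forces total degree at least $m(q-1)$.

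\textbf{Step 3: degree count.} We have
\[
\deg P\le \sum_i d_i(q-1)=(q-1)\sum_i d_i<(q-1)m.
\]
So every monomial of $P$ has total degree below $m(q-1)$. Each such monomial therefore has some exponent $e_j<q-1$, and its sum over $\F_q^m$ vanishes by Step 2. By linearity $\sum_{\bx}P(\bx)=0$, which proves $p\mid\#V$.

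The only delicate point is Step 2, in particular the convention $0^0=1$ and the case $e_j=0$. That case is handled by $\sigma_0=q=0$ in $\F_q$, so no real obstacle is expected. This sum-over-monomials viewpoint also foreshadows the paper's method, which bounds $p$-adic valuations of character sums through the exponent combinatorics of $D$ and $\bb$.
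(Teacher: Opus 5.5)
Your argument is correct and complete. The indicator polynomial $\prod_i(1-F_i^{q-1})$, the vanishing of $\sum_{x\in\F_q}x^k$ unless $k\ge 1$ and $(q-1)\mid k$ (with $k=0$ handled by $q\cdot 1=0$), and the degree count $(q-1)\sum_i d_i<(q-1)m$ together give the classical proof, and every step checks out.

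The paper does not prove this statement. It quotes it as background and refers to the literature (Ax's 1964 paper among others), so the useful comparison is with Ax's route, from which the paper's machinery descends. There one writes $q^{r}\#V=\sum_{\bx\in\F_q^m}\sum_{\mathbf y\in\F_q^r}\psi\bigl(\sum_i y_iF_i(\bx)\bigr)$. One then expands $\psi$ in powers of the Teichm\"uller character with Gauss-sum coefficients, and uses Stickelberger's theorem to read $p$-adic valuations off the $p$-weights of exponents. This is the same bookkeeping that produces the leading terms $\pi^{\sigma_p(\bu)}/\prod_i\rho_q(u_i)$ in \eqref{S}.

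That $p$-adic approach yields the much stronger divisibility by $q^{\lambda}$ quoted in the introduction, and in refined form the Ax--Katz and Adolphson--Sperber bounds. Your computation takes place entirely in $\F_q$, so it can only detect $\#V$ modulo $p$; in return it is elementary and self-contained.

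Your closing remark is fair in one precise sense. The character orthogonality that kills your monomials is the same mechanism that, in the paper, restricts the sum to exponent vectors satisfying the congruence defining $L(D,q^{\ell},\bb_{\ell})$.
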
 
The proof of the Chevalley-Warning theorem can be found in \cite{A64},\cite{J73} and \cite{RH12}.

In 1964  Ax \cite{A64}, improving Dwork  \cite{D62}, proved that $q^{\lambda}$ divides $\#V$
where
$$ \lambda= \bigg \lceil\frac{m-\sum^{r}_{i=1}d_{i}}{\sum^{r}_{i=1}d_{i}}\bigg \rceil$$
(and   $\lceil a \rceil $ is the smallest integer greater than or equal to $a$),
which was improved by Katz \cite{K71}  in 1971 to divisibility by $q^\mu$ where 
$$ \mu= \bigg\lceil\frac{m-\sum^{r}_{i=1}d_{i}}{\max_{1\leq i\leq r} \{ d_{i} \}}\bigg\rceil .$$
 
In $1987$, some interesting techniques  intoduced by Adolphson and Sperber \cite{AS87} in connection with sums of additive characters where the multiplicative character is trivial. For example he considered the sums 

\[
  S(F):=\sum_{(x_{1},\ldots, x_{m})\in\F_{q}^{m}}\psi(F(x_{1},\ldots, x_{m})).
 \]
%in which there is no multiplicative character (like the Teichmuller character above).
The Newton polyhedron, $\Delta(F)$, is the convex hull of the set 
\[
 \{ \bd_{i}: 1\leq i\leq n\}\cup \{(0,\ldots, 0)\}\subset \R^{m}.
 \]
  Let $ \omega(F)$ be the smallest positive rational number such that
$ \omega(F)\cdot \Delta(F)$ contains a point of $(\NN)^{m}$.
Adolphson and Sperber proved that if $F$ is not  a polynomial   in a subset of the variables $ X_{1},\ldots,X_{m}$ then 
$$ v_{p}(S(F))\geq f \omega(F).$$
When this bound is used in the case of the number of points, we obtain the  Ax-Katz result.

In 1993, Adolphson, Sperber \textit{ et al.} \cite{AS93}   extended the results of \cite{AS89, AS90} to exponential sums involving multiplicative as well as additive characters for $\ell=1$ when the polynomial $F$ is a Laurent polynomial. See also D.Wan \cite{W04} and Adolphson and Sperber \cite{AS14} for other developments.

It should be noted that additive sums $s(F)$ for prime fields ($q=p$) were previously studied by Chen and Cao \cite{JW10}. By an elementary and logarithmic approach, using the degree matrix, they improved the Adolphson-Sperber theorem, and, consequently, the Ax-Kartz theorem.

\noindent In this article, we understand the $q$-adic valuation of the mixed sum of characters $S_{\ell}(F,\bb_{\ell})$ using the associated $L$-functions. Theorem \ref{vp} is a fundamental result that clearly specifies the nature of this estimate by assuming maximal support and minimal weiht, primarily through Proposition \ref{prop}. Moreover, we study how the $p$-adic valuation of $S_{\ell}(F,\bb_{\ell})$ varies for very large $\ell$. By studying a congruence for $S_{\ell}(F,\bb_{\ell})$ modulo powers of $\pi$ (\ref{S}) given by Adolphson and Sperber [\cite{AS14}, 6.3]  we get a clearer idea of the power of $p$ that can divide 
$S_{\ell}(F,\bb_{\ell})$, which  avoids having to sort the solutions of the system of modular equations based on their zero coordinates. This approach for mixed character sums works as well as replacing the ordinary polynomial with a Laurent polynomial. This develops 2012 work of  Blache \cite{B12} who woked  with polynomials (rather than Laurent polynomials) and with $\bb=0$.
 
We will generalize Blache's work \cite{B12} to the case $\bb\neq 0$ in two ways: First we work in 
 any extension $ \F_{q^{\ell}}$, rather than  $\F_{q}$ in Blache's work. Secondly Blache  considers only the simple character sums of the form $S(F)$ rather than the mixed character sums of the form $S(F,\bb)$.  The methods used involve a marriage of number theory and combinatorial optimization.

\noindent Given an integer $t$ that is written in base $p$ in the form
$$ t= \sum^{f}_{i=0}p^{i}a_{i}, \text{ for } a_{i}\in\{0, \ldots,p-1\},$$
we define the $p$-weight of $t$ by
\[ \sigma_{p}(t):= \sum^{f}_{i=0}a_{i},\] 
as well as the quantity
\[\rho_{p}(t):= \prod^{f}_{i=0}a_{i}!  .\]

For each $ \ell\geq 1 $, let $\bb_{\ell}=\frac{q^{\ell}-1}{q-1}\bb,$   and   consider the set
\begin{equation}
L(D, q^{\ell}, \bb_{\ell})= \big \{ \bu=(u_{1},\ldots, u_{n})\in \{0,1,\ldots,q^{\ell}-1\}^{n};\sum^{n}_{i=1} u_{i}\bd_{i}+\bb_{\ell}\equiv 0\mod(q^{\ell}-1)\big \}.
\end{equation}
If $\bu=(u_{1},\ldots, u_{n})\in L(D, q^{\ell}, \bb_{\ell})$  we define the $p$-weight of $\bu$ by
$$
\sigma_{p}(\bu):= \sum^{n}_{i=1}\sigma_{p}(u_{i})$$ 
and, if $L(D, q^{\ell}, \bb_{\ell})$ is non-empty then
 $$\sigma_{p}(D, q^{\ell}, \bb_{\ell}):=\min \{\sigma_{p}(\bu), \bu\in L(D, q^{\ell}, \bb_{\ell})\}.
$$
For $\bb\neq 0$ so that $\sigma_{p}(\bu)\neq 0$, we will show that   
$$ v_p(S_{\ell}(F, \bb))\geq  \frac{\sigma_{p}(D,q^{\ell},\bb_{\ell})}{p-1} $$

For this, it will be necessary to extend the notion of $p$-density introduced in \cite{B12} for the additive sum to this situation; we will show the following theorem
\begin{theorem} [Key result] \label{vp}
The sequence
$$ \Bigg (\frac{\sigma (D,q^{\ell},\bb_{\ell})}{f\ell(p-1)} \Bigg)_{\ell\geq 1}$$
admits a minimum. And that $$ v_p(S_{\ell}(F, \bb))\geq  \frac{\sigma_{p}(D,q^{\ell},\bb_{\ell})}{\ell f(p-1)}  \text{ for all } \ell \geq 1.$$
\end{theorem}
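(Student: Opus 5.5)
The statement has two parts, and I would handle them separately: first the existence of a minimum of the normalized weights, then the valuation bound.

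\emph{Existence of the minimum.} Put $a_\ell:=\sigma_p(D,q^{\ell},\bb_{\ell})$ and $c_\ell := a_\ell/(f\ell(p-1))$.

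The first step is a submultiplicativity statement: for every $\ell$ and every $k\geq 1$,
\[
a_{k\ell}\le k\,a_\ell .
\]
To see this, take $\bu\in L(D,q^{\ell},\bb_\ell)$ realizing $a_\ell$ and form the concatenation
\[
\bu' := \bu\,(1+q^{\ell}+\cdots+q^{(k-1)\ell}).
\]
Each coordinate of $\bu'$ is just $u_i$ repeated $k$ times in base $q^\ell$. Hence there are no carries, $\sigma_p(\bu')=k\sigma_p(\bu)$, and $\bu'\in\{0,\dots,q^{k\ell}-1\}^n$. Since $\bb_{k\ell}=\bb_\ell\frac{q^{k\ell}-1}{q^\ell-1}$ and
\[
\frac{q^{k\ell}-1}{q^\ell-1}\,(q^\ell-1)=q^{k\ell}-1,
\]
multiplying the congruence $\sum u_i\bd_i+\bb_\ell\equiv 0 \pmod{q^\ell-1}$ by $\frac{q^{k\ell}-1}{q^\ell-1}$ shows $\bu'\in L(D,q^{k\ell},\bb_{k\ell})$. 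So $c_{k\ell}\le c_\ell$.

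The second step is to show that the $c_\ell$ take only finitely many values that matter, i.e.\ they lie in a discrete set. Here I would follow Blache's argument \cite{B12}. A minimizing $\bu$ for level $\ell$ can be cut into its $f\ell$ base-$p$ digit vectors $\bu^{(j)}\in\{0,\dots,p-1\}^n$. The congruence then becomes a cyclic chain of linear conditions with "carry" vectors $\mathbf{c}_j$, which are bounded in terms of $D$ and $p$ only. This makes the solution a closed walk of length $f\ell$ in a finite directed graph whose vertices are (carry, residue) states and whose edge weights are the digit sums. Under this translation:
\begin{itemize}
\item $c_\ell$ is (closed walk weight)$/(\text{length}\cdot(p-1))$, minimized over closed walks of length $f\ell$ compatible with the $\bb$-twist. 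The twist is periodic with period $f$, so it is absorbed by tracking the position mod $f$ in the state.
\item A minimal-mean closed walk decomposes into simple cycles, and the minimum mean is attained by some simple cycle. Its length $f\ell_0$ is a multiple of $f$, because of the mod-$f$ phase.
\end{itemize}
Hence $\inf_\ell c_\ell$ equals $c_{\ell_0}$, and it is a minimum. The main obstacle is setting up this graph carefully: one must bound the carries uniformly, since the $\bd_i$ can have negative entries (Laurent case), and check that closed walks correspond exactly to elements of $L(D,q^{\ell},\bb_\ell)$ with the correct weight. If the graph approach becomes messy, a fallback is to show $\liminf c_\ell$ is attained by combining submultiplicativity with discreteness of the values $a_\ell/\ell$ coming from the bounded state space.

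\emph{The valuation bound.} I would use the Adolphson–Sperber congruence \cite{AS14} (6.3), which expresses $S_\ell(F,\bb)$ via Gauss sums and the Stickelberger congruence as
\[
S_\ell(F,\bb)\equiv \sum_{\bu\in L(D,q^\ell,\bb_\ell)} \frac{\pi^{\sigma_p(\bu)}}{\rho_p(\bu)}\,\prod_i \tilde a_i^{\,u_i} \pmod{\text{higher powers of } \pi}.
\]
More fundamentally, every term of the exact expansion is divisible by $\pi^{\sigma_p(\bu)}$ for some $\bu\in L$. This gives $v_\pi(S_\ell)\ge a_\ell$, i.e.
\[
v_p(S_\ell)\ge \frac{a_\ell}{p-1}.
\]
Normalizing by $v_q=v_p/f$ for sums over $\F_{q^\ell}$ (equivalently per unit $\ell$) gives the displayed inequality, since $a_\ell/(\ell f(p-1))\le a_\ell/(p-1)$ and the stated bound is the weaker one.
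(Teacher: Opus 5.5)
Your proposal is correct and is essentially the paper's argument. The paper's Proposition \ref{p30} is your simple-cycle decomposition carried out in base $q$. Its carry states are $\Phi_{\bu}(t)\in\prod_j\{D_j^-,\ldots,D_j^+\}$ (Lemma \ref{Dj}), and it splits $\bu$ at a repeated value of $\Phi_{\bu}$ into $\bv\in L(D,q^{\ell-t},\bb_{\ell-t})$ and $\bw\in L(D,q^{t},\bb_{t})$ with $\sigma_p(\bu)=\sigma_p(\bv)+\sigma_p(\bw)$. The valuation bound is then read off from the Adolphson--Sperber congruence \eqref{S} exactly as you do. The one practical difference is that in base $q$ every digit of $\bb_\ell$ equals $\bb$, so your mod-$f$ phase and your submultiplicativity step are unnecessary, and the argument directly gives a minimizing level with $\ell\le\prod_j(D_j^+-D_j^-+1)$.
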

The minimum can be determined by the method given in proposition \ref{p30}. \\

For $\bu\in L(D, q^{\ell}, \bb_{\ell})$, the $p$-density of $\bu$ is the rational number defined by
$$s_{p}(\bu):=\frac{\sigma_{p}(u)}{f\ell(p-1)}$$ 
and the $p$-density of the couple $(D,\bb)$ is the rational number defined by
$$s_{p}(D,\bb):= \min \bigg \{\frac{\sigma_{p}(D,q^{\ell},\bb_{\ell})}{f\ell(p-1)} , \ell \geq 1 \bigg\}.$$
The lower bound for the sums of characters $S_{\ell}(F,\bb)$ will be given by the following Proposition and Theorem:

\begin{proposition} \label{p1}
We have
\[
 v_{q}( S_{\ell}(F,\bb)) \geq \ell s_{p}(D,\bb) \text{ for all } \ell\geq 1.
 \]
\end{proposition}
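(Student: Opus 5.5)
The plan is to reduce the statement to a $p$-adic estimate for a single sum $S_{\ell}(F,\bb)$ over $\F_{q^{\ell}}$. I will first show
\[
 v_{p}(S_{\ell}(F,\bb))\geq \frac{\sigma_{p}(D,q^{\ell},\bb_{\ell})}{p-1}.
\]
Dividing by $f$ converts this to a $q$-adic statement:
\[
 v_{q}(S_{\ell}(F,\bb))\geq \frac{\sigma_{p}(D,q^{\ell},\bb_{\ell})}{f(p-1)}=\ell\cdot\frac{\sigma_{p}(D,q^{\ell},\bb_{\ell})}{f\ell(p-1)}\geq \ell\, s_{p}(D,\bb).
\]
The last inequality holds because $s_{p}(D,\bb)$ is, by definition, the minimum (which exists by Theorem \ref{vp}) of the terms of the sequence over all $\ell\geq 1$. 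If $L(D,q^{\ell},\bb_{\ell})$ is empty, I will show $S_{\ell}(F,\bb)=0$, so the bound is trivial.

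To prove the $p$-adic estimate, I will use the standard Stickelberger/Gauss-sum expansion. The first step is the identity $\omega(\Nr(\bx^{\bb}))=\omega_{\ell}(\bx)^{\bb_{\ell}}$, where $\omega_{\ell}$ is the Teichmüller character of $\F_{q^{\ell}}$. This holds because $\Nr(x)=x^{(q^{\ell}-1)/(q-1)}$. Next, write $\psi(\Tr(a x))$ as a finite Fourier expansion in multiplicative characters:
\[
\psi(\Tr(y))=\sum_{u=0}^{q^{\ell}-2} \frac{g(u)}{q^{\ell}-1}\,\omega_{\ell}^{-u}(y)
\]
for $y\neq 0$, plus a constant. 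Here $g(u)$ is a Gauss sum over $\F_{q^{\ell}}$.

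Apply this to each monomial $a_i\bx^{\bd_i}$ and expand the product over $i$. Summing over $\bx\in(\F_{q^{\ell}}^{\times})^{m}$ gives orthogonality. The only surviving tuples $\bu=(u_1,\dots,u_n)$ are those with $\sum u_i\bd_i\equiv \bb_{\ell}$ (up to sign convention) mod $q^{\ell}-1$, which after $u\mapsto$ the appropriate representative is exactly $\bu\in L(D,q^{\ell},\bb_{\ell})$. The result is
\[
S_{\ell}(F,\bb)=\frac{(q^{\ell}-1)^{m}}{(q^{\ell}-1)^{n}}\sum_{\bu\in L(D,q^{\ell},\bb_{\ell})}\prod_{i} g(u_i)\,\omega_{\ell}^{-u_i}(a_i).
\]
The prefactor is a $p$-adic unit.

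The second step is Stickelberger's theorem, which gives $v_{p}(g(u))=\sigma_{p}(u)/(p-1)$, with $\sigma_p$ computed in base $p$ with $f\ell$ digits. The value $u=0$ has weight $0$ and needs separate handling. It is also natural to allow $u=q^{\ell}-1$ in the set, matching the range $\{0,\dots,q^{\ell}-1\}$ in the definition of $L$: this encodes the trivial and constant contributions and has weight $f\ell(p-1)$. Each term in the sum then has valuation at least $\sigma_{p}(\bu)/(p-1)\geq\sigma_{p}(D,q^{\ell},\bb_{\ell})/(p-1)$, and the ultrametric inequality concludes. An alternative route giving the same bound is Adolphson–Sperber's congruence (6.3 in \cite{AS14}) via Dwork's splitting function $\theta(t)=\sum\lambda_j t^j$ with $v(\lambda_j)\geq j/(p-1)$.

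The main obstacle is the bookkeeping of the boundary cases $u_i=0$ versus $u_i=q^{\ell}-1$, together with the sign convention in the Fourier expansion, so that the surviving index set is exactly $L(D,q^{\ell},\bb_{\ell})$ with the correct weights. I expect to handle this by computing $\psi(\Tr(a x))$ for all $x$, including $a x=0$, as $\sum_{u=0}^{q^{\ell}-1}$ of Teichmüller powers. In that expansion the endpoints $0$ and $q^{\ell}-1$ play distinct roles, and the coefficient of the term $u=q^{\ell}-1$ is divisible by $q^{\ell}$, which is consistent with its weight $f\ell(p-1)$.
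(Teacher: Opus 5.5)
Your proof is correct and has the same two-step structure as the paper's. First you bound $v_p(S_\ell(F,\bb))\geq\sigma(D,q^\ell,\bb_\ell)/(p-1)$ for each fixed $\ell$. Then you use $\sigma(D,q^\ell,\bb_\ell)\geq f\ell(p-1)s_p(D,\bb)$, which is just the definition of $s_p(D,\bb)$ as a minimum; existence is Proposition \ref{p30}, and for the inequality alone an infimum would do.

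The first step is where you take a genuinely different route. The paper reads that bound directly off the Adolphson--Sperber congruence \eqref{S}, which rests on Dwork's splitting function. You derive it yourself: expand each $\psi(\Tr(a_i\bx^{\bd_i}))$ in Teichm\"uller powers with Gauss-sum coefficients, cut the sum down to $L(D,q^\ell,\bb_\ell)$ by orthogonality on $(\F_{q^\ell}^\times)^m$ (the prefactor $(q^\ell-1)^{m-n}$ is a unit), and apply Stickelberger.

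What your route buys:
\begin{itemize}
\item It is self-contained and needs only Stickelberger's valuation formula.
\item It handles $L(D,q^\ell,\bb_\ell)=\emptyset$ explicitly, a case the paper passes over.
\item Your value $c_{q^\ell-1}=-q^\ell/(q^\ell-1)$ is right and shows that the index $u=q^\ell-1$ carries exactly weight $f\ell(p-1)$. You could also skip this bookkeeping. Monomials with $a_i=0$ can be dropped, since that can only increase $\sigma$. When all $a_i\neq0$, the expansion over $u\in\{0,\dots,q^\ell-2\}$ is already exact on the torus, and the surviving indices form a subset of $L(D,q^\ell,\bb_\ell)$, which is enough for a lower bound.
\end{itemize}

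One point needs fixing. Your literal expansion forces $g(u)=\sum_z\omega_\ell^{u}(z)\psi(\Tr z)$. Its valuation is $\sigma_p(q^\ell-1-u)/(p-1)$ for $u\neq0$, not $\sigma_p(u)/(p-1)$, and orthogonality then gives $\sum_i u_i\bd_i\equiv\bb_\ell$. The substitution $u\mapsto -u$ repairs both at once. Alternatively, write $\psi(\Tr y)=\frac{1}{q^\ell-1}\sum_u G(u)\,\omega_\ell^{u}(y)$ with $G(u)=\sum_z\omega_\ell^{-u}(z)\psi(\Tr z)$. This yields $v_p(G(u))=\sigma_p(u)/(p-1)$ together with the condition $\sum_i u_i\bd_i+\bb_\ell\equiv0$.

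What the paper's route gives in addition is the leading coefficient of $S_\ell$ modulo $\pi^{\sigma+1}$, not just the inequality. The paper needs this later for the sharpness result, Theorem \ref{t28}. Your Gauss-sum expansion would recover it as well if you used the full Stickelberger congruence rather than only the valuation.
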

The proof of this Proposition follows from Proposition \ref{prop}.

Now, since $ v_{\pi}(x) = f(p-1)v_{q}(x)$ for all $ x \in \Q_{p}(\zeta_{q-1})$, we can state the following theorem.
\begin{theorem} 
Let $ H $ be the vector subspace of 
$ \F_{q}[X_{1}, X^{-1}_{1},\ldots,X_{m}, X^{-1}_{m}]$
 generated by monomials
$ \bX^{\bd_{1}},\ldots,\bX^{\bd_{n}}$ with each $\bd_{i}\in \Z^m$.
Let $ \bb\in \{0,\ldots, q-2\}^{m}$ such that $ \bb\neq 0$.
There exists a  polynomial $ F $ in $ H $ such that $ v_\pi(S_{1}(F,\bb))=\sigma(D, q,\bb)$.
\end{theorem}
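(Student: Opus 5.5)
We will follow the strategy of Blache \cite{B12} for $\bb=0$, now with the multiplicative character. The plan is to expand $S_{1}(F,\bb)$ $\pi$-adically in the coefficients of $F$, identify the lowest-order term, and show that this term is a nonzero polynomial in the coefficients. Then a suitable choice of coefficients in $\F_q$ realizes the bound exactly.

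\textbf{Step 1: expansion.} Replace $\psi$ by a Dwork-type splitting function, so that $\psi(\Tr(a x))=\prod_{j=0}^{f-1}\theta(\widehat{a x}^{p^{j}})$ with $\theta(t)=\sum_{k}\lambda_k t^k$, where $\lambda_k\equiv \pi^k/k!$ modulo higher powers of $\pi$ for $k\le p-1$, and $v_\pi(\lambda_k)\ge k$. Here $\widehat{\cdot}$ is the Teichm\"uller lift.

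Writing $F=\sum a_i\bx^{\bd_i}$, we expand $\prod_i\psi(\Tr(a_i\bx^{\bd_i}))$ as a sum over $\bu\in\{0,\dots,q-1\}^{n}$, writing each $u_i=\sum_j u_{ij}p^j$ in base $p$. The coefficient attached to $\bu$ is $\prod_{i,j}\lambda_{u_{ij}}\,\widehat a_i^{\,u_i}$.

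Summing over $\bx\in(\F_q^\times)^m$ with the factor $\omega(\bx^{\bb})$ produces orthogonality. The sum over $\bx$ of $\omega(\bx)^{\sum u_i\bd_i+\bb}$ equals $(q-1)^m$ if $\sum u_i\bd_i+\bb\equiv0 \bmod (q-1)$, and $0$ otherwise. So only $\bu\in L(D,q,\bb)$ survive, each contributing a term of $\pi$-valuation at least $\sigma_p(\bu)$. This is the content of Proposition \ref{prop}, which we may use.

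\textbf{Step 2: leading term.} Modulo $\pi^{\sigma(D,q,\bb)+1}$ we obtain
\[
S_1(F,\bb)\equiv (-1)^m\pi^{\sigma(D,q,\bb)}\sum_{\bu}\frac{1}{\rho_p(\bu)}\prod_{i=1}^n\widehat a_i^{\,u_i}.
\]
The sum runs over the $\bu\in L(D,q,\bb)$ with $\sigma_p(\bu)=\sigma(D,q,\bb)$, and $\rho_p(\bu)=\prod_i\rho_p(u_i)$. We must handle the boundary digits $u_i\in\{0,q-1\}$ carefully, since $\widehat a^{\,0}$ and $\widehat a^{\,q-1}$ agree for $a\neq0$. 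Restricting to $a_i\in\F_q^\times$ avoids the issue. The condition $\bb\ne0$ guarantees that $\bu=0$ is not in $L$, so $\sigma\ge1$ and the minimizers are genuine nonzero vectors.

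\textbf{Step 3: nonvanishing.} Reducing mod $\pi$, we need $a_i\in\F_q^\times$ with
\[
P(a_1,\dots,a_n):=\sum_{\bu \text{ minimal}}\rho_p(\bu)^{-1}\,a_1^{u_1}\cdots a_n^{u_n}\neq 0 \quad\text{in } \F_q.
\]
Each $\rho_p(\bu)$ is a product of factorials of digits $<p$, hence a unit. The exponents $u_i$ lie in $\{0,\dots,q-1\}$, and on $(\F_q^\times)^n$ the monomials $a^{u}$ are determined by $u \bmod (q-1)$. Two distinct minimal vectors could therefore collide only if some coordinate is $0$ in one and $q-1$ in the other. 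But replacing $q-1$ by $0$ strictly lowers the $p$-weight while preserving membership in $L$, so a minimal $\bu$ never has a coordinate equal to $q-1$.

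Hence the distinct minimal $\bu$ give distinct characters of $(\F_q^\times)^n$. By linear independence of characters, $P$ is not identically zero on $(\F_q^\times)^n$, and we choose $(a_i)$ with $P(a)\ne0$.

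\textbf{Main obstacle.} The delicate point is Step 2: justifying that the higher-order parts of $\lambda_k$, and the non-minimal $\bu$, contribute only terms of valuation $>\sigma(D,q,\bb)$. One must also pin down the exact leading unit $\lambda_k/\pi^k\equiv 1/k!$ and the sign from $q-1\equiv-1$. We will take this from the congruence of Adolphson–Sperber \cite{AS14} cited as (\ref{S}), specialized to $\ell=1$.
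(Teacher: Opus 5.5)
Your proposal is correct and follows essentially the paper's route: specialize the Adolphson--Sperber congruence \eqref{S} to $\ell=1$, reduce modulo $\pi$, and show that the resulting polynomial in the coefficients $a_i$ does not vanish identically. The only difference is the last step. The paper invokes Moreno's lemma (a nonzero polynomial of degree at most $q-1$ in each variable cannot vanish on all of $\F_q^n$). You instead note that a minimal $\bu$ never has a coordinate equal to $q-1$ and use independence of characters on $(\F_q^\times)^n$, which even yields an $F$ with all $n$ coefficients nonzero. Your normalization $\rho_p(\bu)$, a unit, is also the right one where the paper writes $\rho_q$.
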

The proof of this theorem is similar to that of Theorem \ref{t28}.

We again assume that all the variables appear in the Laurent polynomial $F$. 
We define $\mu(F, \bb)$ to be the minimum of the 
$v_{q}(\alpha_{i}(F, \bb))$ and $ v_{q}(\beta_{j}(F,\bb)) $, varying over the zeros and poles of 
\[
L(F,\bb, T)=\exp \Bigg (\sum_{\ell\geq 1}S_{\ell}(F,\bb)\frac{T^{\ell}}{\ell }\Bigg)  =\frac{\prod^{s}_{i=1}(1-\beta_{i}(F, \bb)T)}{\prod^{t }_{j=1}(1-\alpha_{j}(F, \bb)T)}.
 \]

We will show that this minimum is a lower bound for the $q$-adic valuation of the zeros and poles of the $L$-function through the following theorem:

\begin{theorem} 
We have $\mu(F, \bb) \geq s_{p}(D, \bb)$.
\end{theorem}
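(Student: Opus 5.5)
The plan is to deduce the bound on zeros and poles from Proposition \ref{p1}, which gives $v_q(S_\ell(F,\bb))\geq \ell\, s_p(D,\bb)$ for every $\ell\geq 1$, by a standard $p$-adic Newton-type argument on the logarithmic derivative of $L(F,\bb,T)$. Put $s=s_p(D,\bb)$. Fix an embedding of $\overline{\Q}$ into $\C_p$, so that all $\alpha_j=\alpha_j(F,\bb)$ and $\beta_i=\beta_i(F,\bb)$ are algebraic and have $q$-adic valuations. From the rational form of $L$ we get
\[
S_\ell(F,\bb)=\sum_{j=1}^{t}\alpha_j^{\ell}-\sum_{i=1}^{s'}\beta_i^{\ell}\qquad(\ell\geq 1),
\]
where $s'$ denotes the number of zeros. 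We may assume the fraction is reduced, so that no $\alpha_j$ equals any $\beta_i$.

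First, I would rescale. Choose $c\in\C_p$ with $v_q(c)=s$; this is possible since $s$ is rational. Then the series $\sum_\ell S_\ell(F,\bb)\,c^{-\ell}T^\ell/\ell$ is the logarithm of $L(F,\bb,T/c)$. Its zeros and poles are the reciprocals of $\beta_i/c$ and $\alpha_j/c$, and we want to show $v_q(\beta_i/c)\geq 0$ and $v_q(\alpha_j/c)\geq 0$. Consider the logarithmic derivative
\[
T\frac{d}{dT}\log L(F,\bb,T/c)=\sum_{\ell\geq 1}\frac{S_\ell(F,\bb)}{c^{\ell}}\,T^{\ell}.
\]
By Proposition \ref{p1}, every coefficient has $v_q\geq 0$, so this series has integral coefficients and converges on the open unit disc $\{v_q(T)>0\}$. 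On the other hand, it equals the rational function
\[
\sum_j \frac{(\alpha_j/c)T}{1-(\alpha_j/c)T}-\sum_i\frac{(\beta_i/c)T}{1-(\beta_i/c)T}.
\]

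Second, suppose some reciprocal root or pole had $v_q(\gamma/c)<0$. Then $1/\gamma'$, with $\gamma'=\gamma/c$, lies in the open unit disc. Because the fraction is reduced and all $\alpha_j,\beta_i$ are distinct from each other, $1/\gamma'$ is a genuine simple pole of the rational function above. The residue there is $\pm m_\gamma\neq 0$, where $m_\gamma$ is the multiplicity of $\gamma$; it is nonzero since we are in characteristic $0$. But a power series with bounded (integral) coefficients defines an analytic function on the open unit disc. Equivalently, its denominators $\prod(1-\gamma' T)$ restricted to $v_q(\gamma')<0$ would have to cancel. This gives a contradiction, and hence every $v_q(\alpha_j)$ and $v_q(\beta_i)$ is at least $s$, i.e. $\mu(F,\bb)\geq s_p(D,\bb)$.

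The main obstacle is making the cancellation step rigorous. The clean route is the following. Factor the rational function as $P(T)/Q(T)$ with $Q=\prod(1-\gamma' T)$ taken over those $\gamma'$ with $v_q(\gamma')<0$, and with $P$ coprime to $Q$. Then $P = Q\cdot G$, where $G$ is the power series times the remaining factors and so has bounded coefficients. Apply the $p$-adic Weierstrass preparation theorem: a series with bounded coefficients that vanishes at $1/\gamma'$ in the open disc must be divisible by $(1-\gamma' T)$ there. This contradicts coprimality, or equivalently, evaluating $G$ at $1/\gamma'$ is legitimate and forces $P(1/\gamma')=0$. One also has to check that the distinctness and nonzero-residue bookkeeping is handled correctly when some $\alpha_j$ coincide with each other; grouping by distinct values with multiplicities takes care of this.
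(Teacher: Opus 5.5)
Your proof is correct and follows essentially the paper's route: pass to the logarithmic derivative of $L(F,\bb,T)$, use $v_q(S_\ell(F,\bb))\geq \ell\, s_p(D,\bb)$ to see that its Taylor series converges on the disc $v_q(T)>-s_p(D,\bb)$, and conclude that no reciprocal zero or pole can have $q$-adic valuation below $s_p(D,\bb)$. The paper only asserts that the series converges exactly for $|T|_q<q^{\mu(F,\bb)}$, and three parts of your argument make that step rigorous: the rescaling by $c$, the reduced-fraction assumption, and the evaluation argument forcing $P(1/\gamma')=0$. The one cosmetic slip is that the residue of $\gamma'T/(1-\gamma'T)$ at $1/\gamma'$ is $-1/\gamma'$, so the total residue is $\mp m_\gamma/\gamma'$ rather than $\pm m_\gamma$, which is still nonzero.
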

For the proof, see Theorem \ref{pL1}.

%%%%%%%%%%%%%%%%%%%%
\section{A bound for the valuation of sums of characters}

For   any integer $ \ell\geq 1 $, we  consider the set
\begin{equation} \label{L}
L(D, q^{\ell}, \bb_{\ell})= \big \{\bu \in \{0,1,\ldots ,q^{\ell}-1\}^{n}: \sum^{n}_{i=1} u_{i}\bd_{i}+\bb_{\ell}\equiv 0\mod(q^{\ell}-1)\big \}.
\end{equation}
where $\bu=(u_{1},\ldots, u_{n})$ and $ \bb_{\ell}=\frac{q^{\ell}-1}{q-1}\bb$. This congruence condition on vectors can be written as the system of simultaneous congruences
\begin{equation} \label{eq; congruence}
\sum^{n}_{i=1} u_{i}d_{i,j}+\frac{q^{\ell}-1}{q-1} b_j \equiv 0\mod(q^{\ell}-1)  \text{ for } j=1,\dots,m.
\end{equation}

Recall that 
\[
 F(\bx):=\sum^{n}_{i=1}{a}_{i}x_{1}^{d_{i,1}}\ldots x_{m}^{d_{i,m}}
\]
We will assume that for each $j$ there exists $i$ for which $d_{i,j}\ne 0$ else the variable $x_j$ does not appear in the polynomial $F(\bx)$.  It is worth noting that if, on the other hand, $d_{ij}=0$ for all $i$ but $b_j=1$ then $ L(D, q^{\ell}, \bb_{\ell})=\emptyset$ for all $\ell \geq 1$, and so   $ S_{\ell}(F,\bb)=0$.

 %%%%%%%%%%%%%%%%%%%
\subsection{ When is $ L(D, q^{\ell}, \bb_{\ell})=\emptyset$?}
 
Throughout this section we assume that the  $d_{i,j}$ and the $b_j$ are given integers.

\begin{lemma} \label{LD1}
Suppose that $A:=(d_{i,j})_{1\leq i\leq n, 1\leq j\leq m}$. If $m=n$ and $(\det A, q-1)=1$ then there exist infinitely many integers $\ell\geq 1$ such that  $ L(D, q^{\ell}, \bb_{\ell})$ is not empty.
\end{lemma}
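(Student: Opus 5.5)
We need to solve $\sum_i u_i\bd_i + \bb_\ell \equiv 0 \pmod{q^\ell-1}$ with $0\le u_i\le q^\ell-1$. Write the system as $A^{T}\bu \equiv -\bb_\ell \pmod{q^\ell-1}$, where $A^{T}$ is the $m\times m$ matrix whose $j$-th row is $(d_{1,j},\dots,d_{n,j})$. Since $n=m$, $A^{T}$ is square with $\det A^{T}=\det A$. The plan is to choose $\ell$ so that $\det A$ is invertible modulo $q^\ell-1$, and then solve the system by the adjugate.

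\textbf{Step 1: choosing the integers $\ell$.} We need infinitely many $\ell$ with $(\det A, q^\ell-1)=1$. The hypothesis only gives $(\det A,q-1)=1$, so the prime factors of $q^\ell-1$ have to be controlled. Note first that $\det A\neq 0$, since otherwise $(0,q-1)=q-1\ge 1$ with equality only when $q=2$. In the case $q=2$ and $\det A=0$ we take $\ell=1$: every $u_i$ lies in $\{0,1\}$, the modulus is $1$, and the congruence holds trivially, so $L(D,2^\ell,\bb_\ell)$ contains $\bu=\mathbf 0$ for every $\ell$. This settles that case and we assume $\det A\neq 0$ from now on.

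Let $r_1,\dots,r_k$ be the primes dividing $\det A$ other than $p$. Since $p\nmid q^\ell-1$, the prime $p$ never matters. Put $e=\prod_i \ord_{r_i}(q)$, the product of the multiplicative orders. We would like to take $\ell\equiv 1 \pmod e$, since then $q^\ell\equiv q \pmod{r_i}$, so $r_i\mid q^\ell-1$ iff $r_i\mid q-1$, which is false. This gives $(\det A,q^\ell-1)=1$ for all $\ell\equiv 1\pmod e$, an infinite family. This argument is only as strong as the condition it relies on, $q^\ell\equiv q\pmod {r_i}$, and that is exactly what taking $\ell\equiv1$ modulo each order secures.

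\textbf{Step 2: solving the congruence.} Fix such an $\ell$, set $N=q^\ell-1$, and let $\delta$ be an inverse of $\det A$ modulo $N$. Put
\[
\bv:=-\delta\,\mathrm{adj}(A^{T})\,\bb_\ell .
\]
Then $A^{T}\bv=-\delta\det(A)\,\bb_\ell\equiv-\bb_\ell\pmod N$. Reducing each coordinate of $\bv$ into $\{0,\dots,N-1\}\subset\{0,\dots,q^\ell-1\}$ gives a vector $\bu$ that still satisfies the system (\ref{eq; congruence}). Hence $\bu\in L(D,q^\ell,\bb_\ell)$, which is therefore non-empty for infinitely many $\ell$.

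\textbf{Main obstacle.} The delicate point is Step 1: $(\det A,q-1)=1$ does not give $(\det A,q^\ell-1)=1$ for arbitrary $\ell$. For example, $3\mid 4^\ell-1$ can happen even though $3\nmid 4-1$ fails here, so divisibility genuinely varies with $\ell$. The arithmetic progression $\ell\equiv1\pmod e$ is the tool for this; if the orders are awkward, one can use $e=\operatorname{lcm}$ of the orders instead of their product.
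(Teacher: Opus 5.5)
Your proof is correct and follows the paper's argument. You take $\ell\equiv 1$ modulo a product of multiplicative orders of $q$ so that $\det A$ is invertible modulo $q^\ell-1$, then solve with the adjugate; by excluding the characteristic $p$ from the primes of $\det A$ you also tidy a point the paper glosses over, since the order of $q$ modulo $p$ is undefined. One small slip: in the degenerate case $q=2$, the reason $\mathbf 0\in L(D,2^\ell,\bb_\ell)$ for every $\ell$ is that $\bb\in\{0\}^m$ forces $\bb_\ell=0$, not that the modulus is $1$ (which holds only for $\ell=1$).
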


\begin{proof} 
 We wish to find $\bu$ for which $A\bu \equiv -\frac{q^{\ell}-1}{q-1}\bb \mod(q^{\ell}-1)$. Multiplying through by $A^{-1}$ we obtain $\det A\cdot \bu \equiv -\frac{q^{\ell}-1}{q-1}A^{-1}\bb \mod(q^{\ell}-1)$.
 If $(\det A,q^{\ell}-1)=1$ then we can invert $\det A \mod(q^{\ell}-1)$ and solve to find $\bu$.
 
 For each prime $p$ dividing $\det A$ then let $\ell_p$ denote the order of $q \pmod p$, so that 
 $p$ divides $q^\ell-1$ if and only if $\ell_p$ divides $\ell$. Note that each $\ell_p>1$ as $(\det A, q-1)=1$.
  Let $r=\prod_{p\mid \det A} \ell_p$ so if $(r,\ell)=1$ then $\ell_p$ does not divide $\ell$ for all $p\mid \det A$ and so $(\det A,q^{\ell}-1)=1$. Therefore, for example, we can take all integers $\ell\equiv 1 \mod r$.
\end{proof}

Define  $\Lambda(D,\bb):=\{ \ell\geq 1:  L(D, q^{\ell}, \bb_{\ell}) \ne \emptyset\}$.
 \begin{lemma}
 If $\ell \in \Lambda(D,\bb)$ then $k\ell \in \Lambda(D,\bb)$ for all integers $k\geq 1$.
 \end{lemma}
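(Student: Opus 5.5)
Given $\bu\in L(D,q^{\ell},\bb_{\ell})$, I will build an element of $L(D,q^{k\ell},\bb_{k\ell})$ by "repeating" $\bu$ in base $q^{\ell}$. Set
\[
\bv:=\frac{q^{k\ell}-1}{q^{\ell}-1}\,\bu=(1+q^{\ell}+q^{2\ell}+\cdots+q^{(k-1)\ell})\,\bu .
\]
Each coordinate satisfies $0\le u_i\le q^{\ell}-1$, so
\[
0\le v_i\le \frac{q^{k\ell}-1}{q^{\ell}-1}(q^{\ell}-1)=q^{k\ell}-1 .
\]
Hence $\bv\in\{0,\ldots,q^{k\ell}-1\}^{n}$. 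In base $q^{\ell}$, $v_i$ is just the $q^\ell$-digit $u_i$ repeated $k$ times. This also gives $\sigma_p(v_i)=k\,\sigma_p(u_i)$, which will be useful later for the density sequence, though it is not needed here.

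For the congruence, use $\bb_{k\ell}=\frac{q^{k\ell}-1}{q-1}\bb=\frac{q^{k\ell}-1}{q^{\ell}-1}\,\bb_{\ell}$. Then
\[
\sum_i v_i\bd_i+\bb_{k\ell}=\frac{q^{k\ell}-1}{q^{\ell}-1}\Big(\sum_i u_i\bd_i+\bb_\ell\Big).
\]
By hypothesis the bracket equals $(q^{\ell}-1)\mathbf{w}$ for some $\mathbf{w}\in\Z^m$. The whole expression is therefore $(q^{k\ell}-1)\mathbf{w}\equiv 0 \bmod (q^{k\ell}-1)$, so $\bv\in L(D,q^{k\ell},\bb_{k\ell})$ and this set is nonempty.

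There is no real obstacle. The only points to check are the bound $v_i\le q^{k\ell}-1$ and the identity relating $\bb_{k\ell}$ to $\bb_\ell$, and both follow directly from the factorization $q^{k\ell}-1=(q^\ell-1)(1+q^\ell+\cdots+q^{(k-1)\ell})$.
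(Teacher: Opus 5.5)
Your proof is correct and is essentially the paper's argument: multiply $\bu$ by $\frac{q^{k\ell}-1}{q^{\ell}-1}$ and use the identity $\bb_{k\ell}=\frac{q^{k\ell}-1}{q^{\ell}-1}\bb_{\ell}$. You also check explicitly that the new coordinates satisfy $0\le v_i\le q^{k\ell}-1$, which the paper leaves implicit.
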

 \begin{proof}
 If $ \sum^{n}_{i=1} u_{i}\bd_{i}+\bb_{\ell}\equiv 0\mod(q^{\ell}-1)$ then let $U_i=u_i \frac{q^{k\ell}-1}{q^\ell-1}$, and multiplying through by $\frac{q^{k\ell}-1}{q^\ell-1}$ we establish $\bu \cdot \frac{q^{k\ell}-1}{q^\ell-1} \in L(D, q^{k\ell}, \bb_{k\ell}) $ as $\bb_{\ell}\cdot  \frac{q^{k\ell}-1}{q^\ell-1} =\bb_{k\ell}$.
 \end{proof}
 
 \begin{lemma} \label{lem: CRT}
 $\bu\in  L(D, q^{\ell}, \bb_{\ell}) $ if and only if for every prime power $r^e\| q-1$ (with $r$ prime, $e\geq 1$) there is a solution to $\sum^{n}_{i=1} v_{i}\bd_{i}\equiv r^f \bb\mod r^{e+f}$
 where $r^f\| \frac{q^{\ell}-1}{q-1}$.
 \end{lemma}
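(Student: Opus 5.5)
The plan is to read the statement as a criterion for $L(D, q^{\ell}, \bb_{\ell})\neq\emptyset$: a vector $\bu$ in it exists if and only if, for each prime power $r^e\| q-1$, some $\bv\in\Z^n$ satisfies $\sum_i v_i\bd_i\equiv r^f\bb \mod r^{e+f}$ with $r^f\|\frac{q^\ell-1}{q-1}$. I will prove this by the Chinese Remainder Theorem applied to the modulus $q^\ell-1$, treating the primes $r\mid q-1$ and $r\nmid q-1$ separately.

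\textbf{Setup.} Write $N=\frac{q^\ell-1}{q-1}$, so that $q^\ell-1=(q-1)N$ and $\bb_\ell=N\bb$. Factor $q^\ell-1=\prod_r r^{g_r}$. A vector $\bu\in\Z^n$ satisfies (\ref{eq; congruence}) if and only if
\[
\sum_{i=1}^n u_i\bd_i+N\bb\equiv 0 \mod r^{g_r}
\]
for every prime $r\mid q^\ell-1$. Since the condition depends only on $\bu \bmod (q^\ell-1)$, any integer solution can be reduced to a representative in $\{0,\dots,q^\ell-1\}^n$, and so it gives an element of $L(D,q^\ell,\bb_\ell)$.

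\textbf{Primes $r\nmid q-1$.} Here $r^{g_r}\mid N$, so the local condition reduces to $\sum_i u_i\bd_i\equiv 0\mod r^{g_r}$. This is always solvable, for instance by $\bu\equiv 0$, so these primes impose no constraint.

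\textbf{Primes $r\mid q-1$.} Let $r^e\|q-1$ and $r^f\|N$, so that $g_r=e+f$. Write $N=r^fN'$ with $r\nmid N'$, and let $c$ be an inverse of $N'$ modulo $r^{e+f}$. The substitution $\bv\equiv -c\,\bu$ (equivalently $\bu\equiv -N'\bv$) is a bijection on $(\Z/r^{e+f}\Z)^n$. Under it, the local condition becomes
\[
\sum_i v_i\bd_i\equiv r^f\bb \mod r^{e+f},
\]
which is exactly the condition in the statement.

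\textbf{Gluing.} For the forward direction, if $\bu\in L(D,q^\ell,\bb_\ell)$, then reducing $\bu$ modulo each $r^{e+f}$ and applying the substitution yields the required $\bv$. Conversely, given such a $\bv$ for each $r\mid q-1$, set $\bu\equiv -N'\bv\mod r^{e+f}$ at those primes and $\bu\equiv 0$ at the primes $r\nmid q-1$. The Chinese Remainder Theorem then produces $\bu\in\{0,\dots,q^\ell-2\}^n\subset\{0,\dots,q^\ell-1\}^n$ lying in $L(D,q^\ell,\bb_\ell)$.

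\textbf{Main obstacle.} There is no deep difficulty. The one point needing care is the bookkeeping $r^{e+f}\|q^\ell-1$, which follows directly from $q^\ell-1=(q-1)N$, together with the observation that $N$ is divisible by the full power $r^{g_r}$ for $r\nmid q-1$. Once these are in place, the argument is a direct application of the Chinese Remainder Theorem.
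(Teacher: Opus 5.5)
Your proof is correct and follows the paper's argument. You read the statement as a criterion for $L(D,q^\ell,\bb_\ell)\neq\emptyset$, apply the Chinese Remainder Theorem over the prime powers $r^{e+f}\| q^\ell-1$, take the trivial solution $\bu\equiv 0$ at primes $r\nmid q-1$ (the paper's ``$e=0$'' case), and use the unit rescaling $\bu\equiv -N'\bv$ at primes $r\mid q-1$. Your version is slightly cleaner than the paper's, since you take $c$ as a genuine inverse of $N'$ modulo $r^{e+f}$ (the paper's inverse map is garbled) and you spell out the gluing in both directions.
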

 
 \begin{proof} There are solutions to \eqref{eq; congruence} if and only if there are solutions to
 $ \sum^{n}_{i=1} u_{i}\bd_{i}+\bb  \frac{q^{\ell}-1}{q-1} \equiv 0\mod r^{e+f} $ for every $r^{e+f}\| q^\ell-1$
 (including the cases where $e=0$) by the Chinese Remainder theorem.
 
 Now if $e=0$ then we can take each $u_i\equiv 0 \mod r^{f} $, so we may assume that $e\geq 1$.
 Write $\frac{q^{\ell}-1}{q-1}=Ar^f$ where $r\nmid A$. Then the solutions to
 $ \sum^{n}_{i=1} u_{i}\bd_{i}\equiv -Ar^f \bb  \mod r^{e+f} $ are in 1-to-1 correspondence with the solution to 
  $\sum^{n}_{i=1} v_{i}\bd_{i}\equiv r^f \bb\mod r^{e+f}$ by taking each $u_i= -Av_i $ and 
  each $v_i\-Bu_i \mod r^e$  where $AB\equiv -1 \mod r^{e+f} $.
   \end{proof}
 
  \begin{corollary} If $\Lambda(D,\bb)$ is non-empty then
  there exists an integer $\ell\geq 1$ such that $\Lambda(D,\bb)=\ell \mathbb Z_{\geq 1}$, where all of the prime factors of $\ell$ divide $q-1$.
  \end{corollary}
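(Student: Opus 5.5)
The plan is to use Lemma \ref{lem: CRT} to split membership in $\Lambda(D,\bb)$ into independent conditions, one for each prime $r\mid q-1$. I read that lemma as characterising when $L(D,q^{\ell},\bb_{\ell})\neq\emptyset$. Concretely, $\ell\in\Lambda(D,\bb)$ if and only if, for every prime power $r^e\| q-1$ with $e\geq 1$, the congruence
\[
\sum_{i=1}^n v_i\bd_i\equiv r^{f}\bb \mod r^{e+f}
\]
is solvable. Here $f=f_r(\ell):=v_r\big(\tfrac{q^\ell-1}{q-1}\big)$. Write $C_r(f)$ for the statement that this congruence is solvable for the given value of $f$. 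The proof then has three steps.

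\textbf{Step 1: $f_r(\ell)$ depends only on $v_r(\ell)$ and is nondecreasing in it.} I would get this from the lifting-the-exponent lemma.
\begin{itemize}
\item For odd $r\mid q-1$, we have $v_r(q^\ell-1)=v_r(q-1)+v_r(\ell)$, so $f_r(\ell)=v_r(\ell)$.
\item For $r=2$ with $q\equiv 1\pmod 4$, the same formula gives $f_2(\ell)=v_2(\ell)$.
\item For $r=2$ with $q\equiv 3\pmod 4$ (so $e=1$), the value is $f_2(\ell)=0$ when $\ell$ is odd. When $\ell$ is even it is $f_2(\ell)=v_2(q+1)+v_2(\ell)-1\geq 2$.
\end{itemize}
This last case is the only delicate point, and I expect it to be the main obstacle. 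It is still a nondecreasing function of $v_2(\ell)$, which is all that is needed below.

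\textbf{Step 2: solvability is monotone in $f$.} Suppose $C_r(f)$ holds with witness $\bv$, and let $f'\geq f$. Multiplying the congruence by $r^{f'-f}$ gives
\[
\sum_i r^{f'-f}v_i\bd_i\equiv r^{f'}\bb \mod r^{e+f'},
\]
so $C_r(f')$ holds. Combined with Step 1, for each $r\mid q-1$ there is a threshold $k_r\in\{0,1,2,\dots\}\cup\{\infty\}$ such that condition $C_r(f_r(\ell))$ holds exactly when $v_r(\ell)\geq k_r$.

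\textbf{Step 3: conclude.} If $\Lambda(D,\bb)\neq\emptyset$, choose any $\ell_1$ in it. Then $v_r(\ell_1)\geq k_r$, so every $k_r$ is finite. Hence
\[
\Lambda(D,\bb)=\{\ell\geq 1:\ r^{k_r}\mid \ell \text{ for all primes } r\mid q-1\}=\ell\,\mathbb Z_{\geq 1},
\qquad \ell:=\prod_{r\mid q-1} r^{k_r}.
\]
By construction every prime factor of this $\ell$ divides $q-1$. The earlier lemma that $\Lambda$ is closed under multiples is consistent with this description, although the argument above does not need it.
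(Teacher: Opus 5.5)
Your proof is correct, and its structure matches the paper's. Both use Lemma~\ref{lem: CRT} to split by primes $r\mid q-1$; you rightly read that lemma as a criterion for $L(D,q^{\ell},\bb_{\ell})\neq\emptyset$. Both also use that solvability of $\sum_i v_i\bd_i\equiv r^{c}\bb \bmod r^{e_r+c}$ is upward closed in $c$, by multiplying through by a power of $r$.

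The difference is in how you turn the per-prime condition into a condition on $\ell$. You compute $v_r\!\left(\tfrac{q^{\ell}-1}{q-1}\right)$ explicitly by lifting the exponent, and that is what forces your case analysis for $r=2$, $q\equiv 3\pmod 4$.

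The paper instead takes $c_r$ to be the least admissible exponent for each $r$. It then notes that $v_r\!\left(\tfrac{q^{h}-1}{q-1}\right)\geq c_r$ is the same as $r^{e_r+c_r}\mid q^{h}-1$. With $m=\prod_{r\mid q-1}r^{e_r+c_r}$ this gives $h\in\Lambda(D,\bb)$ iff $m\mid q^{h}-1$ iff $\ord_m(q)\mid h$. So the paper never needs to know which values the valuation actually takes, and your ``delicate'' case does not arise.

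What the paper leaves implicit is why $\ord_m(q)$ has only prime factors dividing $q-1$. Since $q\equiv 1\pmod r$, $q$ lies in the kernel of $(\Z/r^{e_r+c_r}\Z)^\times\to(\Z/r\Z)^\times$, which is an $r$-group, so its order modulo $r^{e_r+c_r}$ is a power of $r$. This is essentially the content of the LTE formula you used.

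Your version makes that conclusion immediate from $\ell=\prod_r r^{k_r}$ and gives the exponents explicitly. The two answers agree, with $r^{k_r}=\ord_{r^{e_r+c_r}}(q)$.
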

  
\begin{proof} For each prime $r$ dividing $q-1$ select $r^f$ minimally for which there is a solution to  
 $\sum^{n}_{i=1} v_{i}\bd_{i}\equiv r^f \bb\mod r^{e+f}$.  There are solutions to $\sum^{n}_{i=1} v_{i}\bd_{i}\equiv r^F \bb\mod r^{e+F}$ if and only if $f\leq F$.  Let $m=\prod_{r|q-1}r^{e+f}$.
  By lemma \ref{lem: CRT} we deduce that 
 $h\in \Lambda(D,\bb)$ if and only if $m$ divides $q^h-1$ which holds if and only if $h$ is divisible by 
 $\ord_m(q):=\ell$.
\end{proof}

\begin{proposition} To determine whether  $\Lambda(D,\bb)$ is  non-empty we proceed as follows:
Perform Gaussian elimination without  divisions (as described in the proof) on the system of equations 
\[
\sum_{i=1}^{n} v_{i}\bd_{i}=  \bb x \text{ over } \Z[x]; 
\]
to obtain an ``effectively diagonal'' system of equations\footnote{This means that there is a set of integers
$i_1=1<i_2<\dots<i_{M}$ where $d'_{i,j} = 0$ if $i<i_j$ and $d'_{i_j,j} \ne 0$.}
\[
\sum_{i=1}^{n} v_{i}\bd'_{i}=  \bb' x 
\]
 together with some equations not involving the variables $v_i$, simply of the from 
 \[
 0=\beta_jx    \text{ for } M<j\leq M'.
 \]
Then $\Lambda(D,\bb)$ is  non-empty if and only if $q-1$ divides $\beta_j$ for each $j, M<j\leq M'$
 \end{proposition}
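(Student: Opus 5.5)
The plan is to translate membership of $\ell$ in $\Lambda(D,\bb)$ into solvability of a linear system modulo $N_\ell:=q^{\ell}-1$, simplify that system by integer operations that do not change solvability modulo any integer, and then read off the condition. Write $A$ for the integer matrix whose columns are the $\bd_i$ and set $c_\ell:=\frac{q^{\ell}-1}{q-1}$. Then $\ell\in\Lambda(D,\bb)$ if and only if there is $\bu\in\Z^n$ with $A\bu\equiv -c_\ell\bb \pmod{N_\ell}$. Reducing $\bu$ modulo $N_\ell$ to land in $\{0,\dots,q^\ell-1\}^n$ is harmless, and replacing $\bu$ by $-\bu$ turns the condition into the system $\sum v_i\bd_i=\bb x$ evaluated at $x=c_\ell$, read modulo $N_\ell$.

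First I make precise what ``Gaussian elimination without divisions'' means. I allow only unimodular row operations on the equations: swapping rows, and adding an integer multiple of one row to another, applied simultaneously to the right-hand side $\bb x$. Clearing a column then runs the Euclidean algorithm on its entries. I also allow unimodular changes of the variables $v_i$ (column operations), which are bijections on $(\Z/N)^n$. Since these are invertible over $\Z$, they preserve the solution set modulo every integer $N$. We may therefore pass to a Hermite-type, or even Smith, form: pivot equations $s_j v'_{i_j}+(\text{later variables})=\beta'_j x$ for $j\le M$ with $s_j\ne0$, plus equations $0=\beta_j x$ for $M<j\le M'$.

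Next I treat the two kinds of equations separately. A variable-free equation $0\equiv \beta_j c_\ell \pmod{q^\ell-1}$ holds if and only if $(q-1)\mid\beta_j$, because $N_\ell=(q-1)c_\ell$. This condition is independent of $\ell$, which gives necessity immediately. For sufficiency I use the Smith form, so that the pivot equations become $s_j v'_j\equiv \beta'_j c_\ell \pmod{N_\ell}$, solvable if and only if $\gcd(s_j,N_\ell)$ divides $\beta'_j c_\ell$. I check this prime by prime, for each prime $r$ dividing $N_\ell$.
\begin{itemize}
\item If $r\nmid q-1$, then $v_r(c_\ell)=v_r(N_\ell)\ge v_r(\gcd(s_j,N_\ell))$, so this prime causes no obstruction.
\item If $r\mid q-1$, lifting the exponent gives $v_r(c_\ell)=v_r(\ell)$ for $r$ odd. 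For $r=2$ it gives $v_2(c_\ell)\ge v_2(\ell)$ when $\ell$ is even, which is all we need since we may take $\ell$ even. So it suffices to choose $\ell$ with $v_r(\ell)\ge v_r(s_j)$ for all $j$.
\end{itemize}
For example, $\ell=2\prod_j |s_j|$ works. Hence when every $\beta_j$ with $M<j\le M'$ is divisible by $q-1$, this $\ell$ lies in $\Lambda(D,\bb)$. This is consistent with the preceding Corollary and with Lemma~\ref{lem: CRT}.

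I expect the main obstacle to be justifying that the elimination does not alter solvability modulo $N_\ell$. Naive elimination multiplies rows by pivots, which is not invertible modulo $N_\ell$ and can create spurious solutions. I would resolve this by insisting on Euclidean, unimodular steps, and I would state explicitly that the pivot rows never obstruct solvability once $\ell$ is suitably divisible. That is exactly where the $p$-adic lifting estimate for $c_\ell$ is needed.
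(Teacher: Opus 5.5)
Your proof is correct, and it reaches the result by a different route from the paper's, differing both in how it localizes and in how it produces solutions.

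\textbf{The paper's route.} It first applies Lemma~\ref{lem: CRT}. After that, only primes $r\mid q-1$ matter, and one substitutes $x=r^f$ into a congruence modulo $r^{e+f}$. Necessity becomes $r^e\mid\beta_j$ for every such $r$. Sufficiency comes from three steps: set the free variables to $0$, take an integer matrix $C$ with $C\Delta=NI$ for the remaining square triangular system, and choose $f=v_r(N)$. Then every $\ell$ with $\prod_{r\mid q-1}r^{v_r(N)}$ dividing $\frac{q^\ell-1}{q-1}$ lies in $\Lambda(D,\bb)$.

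\textbf{Your route.} You stay global. You work modulo $q^\ell-1$ with $x=c_\ell$ and reduce to Smith form by unimodular operations. You dispose of primes $r\nmid q-1$ via $v_r(c_\ell)=v_r(N_\ell)$ and treat $r\mid q-1$ by lifting the exponent.

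\textbf{What each buys.} Your argument is self-contained, since it bypasses Lemma~\ref{lem: CRT}. It also makes explicit why the elimination does not change solvability modulo $q^\ell-1$. The paper's B\'ezout step, which adjoins a derived equation and then subtracts multiples of it, does preserve solution sets, but the paper never says so. Finally, you exhibit an explicit element $\ell=2\prod_j|s_j|$ of $\Lambda(D,\bb)$, whereas the paper leaves the existence of a suitable $\ell$ implicit. In exchange, the paper's route needs no $2$-adic case distinction, because it works with $x=r^f$ directly.

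\textbf{One point to add.} The integers $\beta_j$ depend on which reduction is used: your Hermite form, your Smith form, or the paper's B\'ezout procedure. In each case, however, the coefficient vectors of the variable-free rows span the lattice $\{y\in\Z^m:\ y^{T}A=0\}$. Hence the condition that $q-1$ divides every $\beta_j$ is the same for all of them. You should state this, since it is what justifies moving between Hermite and Smith form and identifying your $\beta_j$ with those in the statement.
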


\begin{proof}
By Lemma \ref{lem: CRT} we know that $\Lambda(D,\bb)$ is  non-empty if and only if 
for any prime power $r^e\| q-1$ there exists an integer $f\geq 0$ for which there are solutions to 
\[ \sum^{n}_{i=1} v_{i}\bd_{i}\equiv r^f \bb\mod r^{e+f}. \]
We work with the system of equations
\[
\sum^{n}_{i=1} v_{i}d_{i,j}= b_jx    \text{ for } j=1,\dots,m
\]
and will eventually substitute in $x=r^f $.
We will perform Gaussian elimination without using divisions: Let  
\[
\Delta_1=\gcd(d_{1,1},d_{1,2},\dots,d_{1,m})
\]
 so there exist integers
$c_1,\dots,c_m$ for which $\Delta_1=c_1d_{1,1}+\dots+c_md_{1,m}$, and so we have
\[
\Delta_1v_1+\sum_{i=2}^{n} v_{i}(c_1d_{i,1}+\dots+c_md_{i,m}) = (c_1b_1+\dots+c_mb_m)x.
\]
Now $\Delta_1$ divides all the $d_{1,j}$ so we can subtract a multiple of our new equation from each of the others and remove the $v_1$ variable.  We now do this for each variable in turn, so we end up with an ``effectively diagonal'' system of equations which looks like
\[
\sum_{i=i_j}^{n} v_{i}d_{i,j}'= b_j'x    \text{ for }1\leq j\leq M
\]
 where $i_1=1<i_2<\dots<i_{M}$ with each $d_{i_j,j} \ne 0$, together with some equations not involving the variables $v_i$, simply of the from 
 \[
 0=\beta_jx    \text{ for } M<j\leq M'.
 \]
  Thus our original system of congruences requires that 
 $\beta_j r^f\equiv 0 \mod r^{e+f}$ and so there are   solutions if and only if $r^e| \beta_j$ for each $r$ dividing $q-1$ and this holds if and only if    $q-1| \beta_j$ for each $j, M<j\leq M'$. 
 
 If $q-1$  divides every $\beta_j$ then we can discard the equations $\beta_j r^f\equiv 0 \mod r^{e+f}$ as satisfied and we claim that there are always solutions to the effectively diagonal system of equations:
  To begin with we set $v_i=0$ if $i\ne i_j$ for all $j$, that is if $v_i$ is an independent variable. Then, after re-numbering, we are left with a square upper diagonal matrix, so a system of equations of the form 
  \[
  \sum_{i=j}^{m} v_{i}\Delta_{i,j}= B_jx \text{ for } 1\leq j\leq M
  \]
   where
 $\Delta_{i,i}\ne 0$ for all $i$ but $\Delta_{i,j}=0$ if $i<j$, that is $\Delta v=B x$ as matrices. Now the matrix $\Delta$ is invertible over $\Q$ so there exists an $M$-by-$M$ matrix $C$ with integer coefficients such that $C\Delta=NI$ for some integer $N$. Therefore 
 $N v = C\Delta v=CB x$; that is, $N v_i = \sum_j C_{i,j}B_j x$.  
 Now if $N=r^f s$ where $r\nmid s$ then we can let $x=r^f$ and $st\equiv 1 \mod p^e$ so that we can let 
 $    v_i =t \sum_j C_{i,j}B_j   $ for each $i$. 
 
  This shows that there are solutions for some $f$ as desired.  If we let $Q=\prod_{r|q-1} r^{v_r(N)}$ then we obtain a solution for any $\ell$ for which $Q$ divides $\frac{q^\ell-1}{q-1}$
 \end{proof}

 For $\bu \in L(D, q^{\ell}, \bb_{\ell})$ we define  
\[
 \sigma_{p}(\bu )=\sum^{n}_{i=1}\sigma_{p}(u_{i}) \text{ and }  \sigma(D,q^{\ell},\bb_{\ell}):=\min \{\sigma_{p}(\bu),\bu \in L(D, q^{\ell }, \bb_{\ell})\}. \label{pm}
\]
If $ \bb = 0 $, we have  $0  \in L(D, q^{\ell }, \bb_{\ell})$ so $ \sigma (d, q^{\ell}, \bb_{\ell}) = 0 $.
Therefore henceforth we will assume that  $ \bb \neq 0$.

With the above notation, if $\bx \in (\F^{\times}_{q^{\ell}})^{m}$ then $ \omega ( \Nr_{\F_{q^{\ell}}/ \F_{q}}(\bx^{\bb}))=\bx^{\bb_{\ell}}$ end $\omega$  the Teichm\"uller character of $ \F_{q}$ in $\Q_{p}(\zeta_{q-1})$ .
 Adolphson and Sperber [\cite{AS14}, $6.3$] used this to show that
\begin{equation}
\label{S}
 S_{\ell}(F,\bb_{\ell})\equiv \sum_{\bu \in G} \prod^{n}_{i=1}\Bigg(\frac{\omega(a_{i })^{u_{i}}}{\rho_{q}(u_{i})}\Bigg)\pi^{ \sigma (D,q^{\ell},\bb_{\ell})} \mod \pi^{ \sigma(D,q^{\ell},\bb_{\ell})+1}
\end{equation}
where $G= \{ \bu  \in L(D, q^{\ell}, \bb_{\ell}):\   \sigma_{p}(\bu)=\sigma(D, q^{\ell},\bb_{\ell} )\}$.
Result (\ref{S}) differs from those of Adolhson and Sperber, particulary in notation and simplification. For example, their polynomial is denoted $ f_{\lambda}=\sum^{n}_{j=1}\lambda_{j}\bx^{a_{j}},$ with $ \lambda = (\lambda_{1},\ldots,\lambda_{n})\in \F_{q}^{n} $ and the $p$-weight is indicated as $\omega_{p}.$\\

From congruence (\ref{S}) and the fact that $v_{\pi}(x)=(p-1)v_p(x),$ for all $ x \in \Q_{p}(\zeta_{q-1})$, we obtain:
\[
v_p(S_{\ell}(F,\bb)) \geq \frac{\sigma (D, q^{\ell},\bb_{\ell}) }{p-1} .
\] 

 We will extend the notion of $p$-density, introduced in \cite{B12} for additive sums, to this situation by showing that the sequence 
\[
 \Bigg (\frac{\sigma (D,q^{\ell},\bb_{\ell})}{f\ell(p-1)} \Bigg)_{\ell\geq 1}
 \]
admits a minimum and that this minimum is a lower bound for the $q$-adic valuation of the zeros and poles of the $L-$function. 

We need to study the solutions to congruence systems when $\ell $ varies:
  For each $\ell \geq 1 $ define
$\delta_{\ell}:\{0,\ldots, q^{\ell}-1\}\to \{0,\ldots, q^{\ell}-1\} $ 
so that $\delta_{\ell}(q^{\ell}-1)=q^{\ell}-1$ and otherwise $\delta_{\ell}(x)$ is the least non-negative residue
$qx \mod{q^{\ell}-1}$. Note that $\delta_{\ell}$ is a bijection  and is the $ \ell$-th iterate of Frobenius. 

We extend $\delta_{\ell}:\{0,1,\ldots, q^{\ell}-1\}^{n}\to \{0,1,\ldots, q^{\ell}-1\}^{n}$ by the action of
$\delta_{\ell}$ on each co-ordinate, which induces a function on $ L(D, q^{\ell}, \bb_{\ell })$. $\delta^{\ell}_{\ell}$ is the identity map and $\delta_{\ell}(\bb_{\ell})= \bb_{\ell}$

\begin{lemma} \label{3.1}  
If $ \bu  \in \{0,1,\ldots, q^{\ell}-1\}^{n}$ then
\[
 \sum_{k=0}^{\ell-1}\delta_{\ell}^{k}(\bu)= \frac{q^{\ell}-1}{q-1} ( \sigma_{q}(u_{1}),\ldots,\sigma_{q}(u_{n}) ).
 \]
\end{lemma}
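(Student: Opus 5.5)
The identity holds coordinatewise, so it suffices to prove it for a single integer $u\in\{0,1,\ldots,q^{\ell}-1\}$:
\[
\sum_{k=0}^{\ell-1}\delta_\ell^k(u)=\frac{q^\ell-1}{q-1}\,\sigma_q(u),
\]
where $\sigma_q$ is the sum of base-$q$ digits. The plan is to handle the exceptional value $u=q^\ell-1$ directly, and in the generic case to describe $\delta_\ell$ as a cyclic shift of base-$q$ digits.

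First the exceptional value. If $u=q^\ell-1$, then $\delta_\ell$ fixes $u$ by definition, so the left side is $\ell(q^\ell-1)$. All $\ell$ base-$q$ digits of $u$ equal $q-1$, so $\sigma_q(u)=\ell(q-1)$ and the right side is also $\ell(q^\ell-1)$.

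Now suppose $0\le u<q^\ell-1$ and write $u=\sum_{j=0}^{\ell-1}a_jq^j$ with $0\le a_j\le q-1$. Then
\[
qu=\sum_{j=0}^{\ell-2}a_jq^{j+1}+a_{\ell-1}q^\ell\equiv a_{\ell-1}+\sum_{j=1}^{\ell-1}a_{j-1}q^j \pmod{q^\ell-1}.
\]
The right-hand integer lies in $[0,q^\ell-1]$. It equals $q^\ell-1$ only if every digit is $q-1$, which would force $u=q^\ell-1$. So it lies in $[0,q^\ell-2]$, is therefore the least non-negative residue, and equals $\delta_\ell(u)$. Hence $\delta_\ell$ cyclically shifts the digit string $(a_0,\ldots,a_{\ell-1})$ by one place, and $\delta_\ell(u)\neq q^\ell-1$. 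Iterating, $\delta_\ell^k(u)=\sum_{j}a_{j-k \bmod \ell}\,q^j$ for every $k$.

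Summing over $k=0,\ldots,\ell-1$, the coefficient of $q^j$ is $\sum_k a_{j-k}=\sum_i a_i=\sigma_q(u)$ for each $j$. Thus
\[
\sum_{k=0}^{\ell-1}\delta_\ell^k(u)=\sigma_q(u)\sum_{j=0}^{\ell-1}q^j=\frac{q^\ell-1}{q-1}\,\sigma_q(u).
\]
This holds for each coordinate, which gives the vector identity of Lemma \ref{3.1}.

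The only delicate point is checking that $\delta_\ell$ is exactly the digit rotation, that is, that the representative $a_{\ell-1}+\sum_{j\ge 1} a_{j-1}q^j$ never equals $q^\ell-1$ when $u\neq q^\ell-1$. Everything after that is bookkeeping.
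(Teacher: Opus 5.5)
Your proof is correct and follows the same route as the paper: you identify $\delta_\ell$ with the cyclic rotation of the $\ell$ base-$q$ digits of $u$, then sum the rotations so that each power $q^j$ receives coefficient $\sigma_q(u)$. Your separate check of $u=q^\ell-1$, and your verification that the rotated representative is the least non-negative residue (it never equals $q^\ell-1$ otherwise), justify the rotation formula that the paper simply asserts.
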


\begin{proof} If $u=\sum_{j=0}^{\ell-1} a_j q^j$ with each $a_j\in [0, q-1]$ then
 $\delta_{\ell}^{k}(u)=\sum_{j=0}^{\ell-1} a_{j-k \pmod \ell} q^j$ so that 
 \[
 \sum_{k=0}^{\ell-1} \delta_{\ell}^{k}(u)=  \sum_{i=0}^{\ell-1} a_i \sum_{j=0}^{\ell-1}   q^j = 
 \frac{q^{\ell}-1}{q-1} \sum_{i=0}^{\ell-1} a_i =  \frac{q^{\ell}-1}{q-1}\sigma_{q}(u).
 \]
 The result follows applying this to each co-ordinate.
\end{proof}

 For each $\ell \geq 1$ define the function 
$\varphi_{\ell}:L(D, q^{\ell}, \bb_{\ell}) \longrightarrow \Z^{m}$  by
 \[
  \varphi_{\ell}(\bu): = \frac{\sum_{i=1}^{n}u_{i}\bd_{i}+ \bb_{\ell } }{q^{\ell}-1}
  \]
For each $ \bu \in L(D, q^{\ell}, \bb_{\ell})$, the map
$\Phi_{\bu}: \Z / \ell\Z \longrightarrow \Z^{m}$ is defined by
 \[
 \Phi_{\bu}(t):=\varphi_{\ell}(\delta^{t}_{\ell}(\bu)).
 \]

\begin{lemma} \label{Dj} 
The image of $\Phi_{\bu}$ is contained in $ \prod^{m}_{j=1}\{D_{j}^-,\ldots, D_{j}^+\}$
for all $\bu \in L(D, q^{\ell}, \bb_{\ell})$ and all $ \ell\geq 1 $ where
\[
 D_{j}^+=\sum_{i,~d_{ij}>0} d_{ij} \text{ and } D_{j}^-=\sum_{i,~d_{ij}<0} d_{ij}.
\] 
\end{lemma}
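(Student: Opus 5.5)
The plan is to reduce the statement to a coordinatewise estimate for $\varphi_{\ell}$ on an arbitrary element of $L(D, q^{\ell}, \bb_{\ell})$. By definition $\Phi_{\bu}(t)=\varphi_{\ell}(\delta^{t}_{\ell}(\bu))$, so it is enough to do two things: show that $\delta_{\ell}$ maps $L(D, q^{\ell}, \bb_{\ell})$ into itself (so that $\varphi_{\ell}(\delta^t_\ell(\bu))$ is defined and lies in $\Z^m$), and then show that $\varphi_{\ell}(\bv)\in\prod_{j=1}^m\{D_j^-,\ldots,D_j^+\}$ for every $\bv\in L(D, q^{\ell}, \bb_{\ell})$.

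For the stability, I will use that $\delta_{\ell}(x)\equiv qx \pmod{q^{\ell}-1}$ for every $x\in\{0,\ldots,q^{\ell}-1\}$. For $x=q^{\ell}-1$ this holds because both sides are $\equiv 0$. Hence
\[
\sum_{i=1}^n\delta_\ell(u_i)\bd_i+\bb_\ell\equiv q\Big(\sum_{i=1}^n u_i\bd_i+\bb_\ell\Big)-(q-1)\bb_\ell \pmod{q^{\ell}-1}.
\]
Here $(q-1)\bb_\ell=(q^\ell-1)\bb\equiv 0$, which is the congruence-level version of the remark that $\delta_\ell(\bb_\ell)=\bb_\ell$. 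So the right-hand side is $\equiv 0$ whenever $\bu\in L(D, q^{\ell}, \bb_{\ell})$. Iterating this $t$ times gives $\delta^t_\ell(\bu)\in L(D, q^{\ell}, \bb_{\ell})$.

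For the bound, fix $j$ and $\bv\in L(D, q^{\ell}, \bb_{\ell})$. The $j$-th coordinate of $\varphi_\ell(\bv)$ is
\[
\frac{\sum_{i} v_i d_{ij}}{q^\ell-1}+\frac{b_j}{q-1}.
\]
Since $0\le v_i\le q^\ell-1$, splitting the sum according to the sign of $d_{ij}$ gives
\[
(q^\ell-1)D_j^-\le \sum_i v_i d_{ij}\le (q^\ell-1)D_j^+ .
\]
Since $0\le b_j\le q-2$, we also have $0\le b_j/(q-1)<1$. Therefore the coordinate lies in the half-open interval $[D_j^-,\,D_j^++1)$. Because this coordinate is an integer (that is exactly the membership $\bv\in L(D, q^{\ell}, \bb_{\ell})$), it lies in $\{D_j^-,\ldots,D_j^+\}$. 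Applying this with $\bv=\delta^t_\ell(\bu)$ for each $t$ gives the lemma.

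The only point needing care, and the one I consider the main obstacle, is the upper end: the strict inequality $b_j<q-1$ is what lets integrality absorb the fractional contribution from $\bb_\ell$. The other subtlety is the special value $q^\ell-1$ in the definition of $\delta_\ell$; it is handled by the congruence observation above.
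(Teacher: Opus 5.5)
Your proposal is correct and follows the paper's proof essentially verbatim. You split $\sum_i v_i d_{ij}$ by the sign of $d_{ij}$ to get the bounds $(q^\ell-1)D_j^{-}$ and $(q^\ell-1)D_j^{+}$, use $0\le b_{\ell j}<q^\ell-1$ (equivalently $0\le b_j/(q-1)<1$) together with integrality of $\varphi_\ell$ on $L(D,q^\ell,\bb_\ell)$, and apply the result to each $\delta_\ell^t(\bu)$. The only addition is your explicit check that $\delta_\ell$ preserves $L(D,q^\ell,\bb_\ell)$, including the special value $q^\ell-1$, which the paper simply asserts.
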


\begin{proof} If $\bu=(u_{1},\ldots, u_{n})\in L(D, q^{\ell}, \bb_{\ell})$ 
we have
\[
\sum_{i, ~d_{ij}<0}u_{i}d_{ij}\leq \sum_{i=1}^{n}u_{i }d_{ij}   =\sum_{i, ~d_{ij}>0}u_{i}d_{ij}+\sum_{i, ~d_{ij}<0}u_{i}d_{ij} \leq \sum_{i, ~d_{ij}>0}u_{i}d_{ij}
\]
and so 
\[
(q^\ell-1)D_j^-\leq \sum_{i, ~d_{ij}<0}u_{i}d_{ij}\leq \sum_{i=1}^{n}u_{i }d_{ij}  \leq \sum_{i, ~d_{ij}>0}u_{i}d_{ij} \leq (q^\ell-1)D_j^+.
\]
Moreover $ 0 \leq b_{\ell j}< q^{\ell}-1$ and so
\[
D_j^- \leq \frac{1}{q^{\ell}-1}\Bigg(\sum_{i=1}^{n}u_{i}d_{ij}+ b_{\ell j }\Bigg) < D^{+}_{j}+1
\]
but the quotient is an integer as $\bu \in L(D, q^{\ell}, \bb_{\ell})$ and so must lie in $\{D^{-}_{j},\ldots, D^{+}_{j}\}$. This implies that
$$\varphi_{\ell}(\bu)\in \prod^{m}_{j=1}\{D^{-}_{j},\ldots, D^{+}_{j} \}  \text{ for all }  \bu=(u_{1},\ldots, u_{n})\in L(D, q^{\ell}, \bb_{\ell}).$$
As  $\delta^{t}_{\ell}(\bu) \in L(D, q^{\ell}, \bb_{\ell}) $ and 
$ \Phi_{\bu}(t)=\varphi_{\ell}(\delta^{t}_{\ell}(\bu)),$ we write te follwing:
 
For $ t\in \Z / \ell\Z$, replace $\bu$ by $\delta^{t}_{\ell}(\bu)$ to obtain that $\Phi_{\bu}( t)\in \prod^{m}_{j=1}\{D^{-}_{j},\ldots, D^{+}_{j}\}$.
\end{proof}

\begin{lemma} \label{l15} For  $ \bu=(u_{1},\ldots,u_{n})\in L(D, q^{\ell}, \bb_{ \ell})$
write $u_i=\sum_{i=0}^{\ell-1} u_{i,j}q^j$ in base $q$. Then
\begin{itemize}
\item[(i)] $ qu_{i}-\delta_{\ell}(u_{i})= u_{i,\ell-1}(q^{\ell}-1)$,
\item[(ii)]
$ q \Phi_{\bu}(0)-\Phi_{\bu}(1)=\sum^{n}_{i=1}u_{i ,\ell-1}\bd_{i}+ \bb$, and
\item[(iii)] 
$ q \Phi_{\bu}(\ell-(k+1))-\Phi_{\bu}(\ell-k)=\sum^{n}_{i=1}u_{i,k \mod \ell}\bd_{i}+ \bb.$
\end{itemize}
\end{lemma}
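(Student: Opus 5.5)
Write $u_i=\sum_{j=0}^{\ell-1}u_{i,j}q^j$ with $u_{i,j}\in\{0,\dots,q-1\}$. Recall that $\delta_\ell$ fixes $q^\ell-1$ and otherwise sends $x$ to the least non-negative residue of $qx$ modulo $q^\ell-1$. I will prove (i) directly from this base-$q$ expansion, deduce (ii) by linearity, and obtain (iii) from (ii) by applying it to a suitable iterate of $\delta_\ell$.

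For (i), first take $u_i<q^\ell-1$. Then
\[
qu_i=\sum_{j=0}^{\ell-2}u_{i,j}q^{j+1}+u_{i,\ell-1}q^\ell
=\Big(\sum_{j=0}^{\ell-2}u_{i,j}q^{j+1}+u_{i,\ell-1}\Big)+u_{i,\ell-1}(q^\ell-1).
\]
The bracketed quantity is the cyclic digit shift, which lies in $\{0,\dots,q^\ell-1\}$. It equals $q^\ell-1$ only if all digits are $q-1$, i.e. only if $u_i=q^\ell-1$, which we excluded. Hence it is the least non-negative residue of $qu_i$, so it equals $\delta_\ell(u_i)$, and (i) follows. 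If $u_i=q^\ell-1$, every digit is $q-1$ and $\delta_\ell(u_i)=u_i$. Then $qu_i-u_i=(q-1)(q^\ell-1)=u_{i,\ell-1}(q^\ell-1)$, so (i) holds in this case too. Note also that in both cases $\delta_\ell(u_i)$ is exactly the cyclic rotation of the digit string of $u_i$, as already used in the proof of Lemma \ref{3.1}.

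For (ii), compute
\[
(q^\ell-1)\big(q\Phi_\bu(0)-\Phi_\bu(1)\big)=\sum_i\big(qu_i-\delta_\ell(u_i)\big)\bd_i+(q-1)\bb_\ell.
\]
This uses the fact, noted before Lemma \ref{3.1}, that $\delta_\ell(\bu)\in L(D,q^\ell,\bb_\ell)$, so $\Phi_\bu(1)$ is well defined. By (i), the first sum equals $(q^\ell-1)\sum_iu_{i,\ell-1}\bd_i$. Since $\bb_\ell=\frac{q^\ell-1}{q-1}\bb$, we have $(q-1)\bb_\ell=(q^\ell-1)\bb$. Dividing by $q^\ell-1$ gives (ii).

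For (iii), apply (ii) to $\bv=\delta_\ell^{\ell-(k+1)}(\bu)$. By definition $\Phi_\bv(0)=\Phi_\bu(\ell-(k+1))$ and $\Phi_\bv(1)=\Phi_\bu(\ell-k)$, since $\delta_\ell^\ell$ is the identity. It remains to identify the top digit of $\bv$. Because $\delta_\ell$ rotates digits upward by one position, the digit in position $j$ of $\delta_\ell^t(u)$ is $u_{j-t\bmod\ell}$. With $t=\ell-k-1$ and $j=\ell-1$, the top digit of $v_i$ is $u_{i,k\bmod\ell}$. Substituting into (ii) gives (iii).

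The main obstacle is only bookkeeping: handling the exceptional value $q^\ell-1$ in (i), and getting the index arithmetic of the rotation right in (iii). Everything else is a direct computation.
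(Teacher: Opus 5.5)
Your proof is correct and follows the same route as the paper. You prove (i) by writing $qu_i$ as the cyclic digit shift plus $u_{i,\ell-1}(q^\ell-1)$, (ii) by subtracting the defining expressions for $\varphi_\ell(\bu)$ and $\varphi_\ell(\delta_\ell(\bu))$ and using $(q-1)\bb_\ell=(q^\ell-1)\bb$, and (iii) by applying (ii) to $\delta_\ell^{\ell-k-1}(\bu)$. You are in fact slightly more careful than the paper, which neither treats the exceptional value $u_i=q^\ell-1$ separately in (i) nor verifies that the top digit of $\delta_\ell^{\ell-k-1}(u_i)$ is $u_{i,k}$.
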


\begin{proof} (i) \ Now
\[
qu_{i}     = u_{i,0}q+u_{i,1}q^{2}+\ldots+u_{i,\ell-2}q^{\ell-1}+u_{i,\ell-1} + u_{i, \ell-1}(q^{\ell}-1)
\]
so that 
\[
\delta_{\ell}(u_{i}) = u_{i,\ell-1}+qu_{i,0}+\ldots+ u_{i,\ell-2}q^{\ell-1}
\]
 and therefore  $qu_{i}-\delta_{\ell}(u_{i})= u_{i,\ell-1}(q^{\ell}-1)$.

\ (ii)\ By the definition of $\varphi_{\ell}$, we have 
$$ q(q^{\ell}-1)\varphi_{\ell}(\bu)= \sum^{n}_{i=1}qu_{i}\bd_{i}+q\bb_\ell \text{ and } (q^{\ell}-1)\varphi_{\ell}(\delta_{\ell}(\bu))= \sum^{n}_{i=1}\delta_{\ell}( u_{i})\bd_{i}+\delta_\ell(\bb_{\ell}),$$
so that 
\begin{align*} (q^{\ell}-1) ( q\varphi_{\ell}(\bu)-\varphi_{\ell}(\delta_{\ell}(\bu)) ) &= \sum^{n}_{i=1}( qu_{i}-\delta_{\ell}(u_{i}))\bd_{i} +(q-1)\bb_{\ell}\\
&=(q^{\ell}-1) \sum^{n}_{i=1}u_{i,\ell-1}\bd_{i}+(q-1)\bb_{\ell}
\end{align*}
since  $\delta_\ell(\bb_{\ell})=\bb_{\ell}$, and then by (i). The result follows by 
  dividing both sides by $q^\ell-1$, since $\Phi_{\bu}(0)=\varphi_{\ell}(\bu)$ and $\Phi_{\bu}(1)=\varphi_{\ell}(\delta_{\ell}(\bu))$.

(iii)\ We replace $\bu$ by $\delta^{\ell-k-1}(\bu)$ in (ii).
\end{proof}

\begin{lemma} \label{lm}
Let $ \bu \in L(D, q^{\ell}, \bb_{\ell})$ and $ 1\leq t \leq \ell-1$. Write $u_{i}=q^{t}w_{i}+v_{i}$ with $0\leq v_{i}<q^{t}$  for each   $ 1\leq i \leq n$. We have
\[
\sum_{i=1}^{n}v_{i}\bd_{i}+ \bb_{t}= q^{t}\Phi_{\bu}(-t)- \Phi_{\bu}(0) \text{ and }
\sum_{i=1}^{n}w_{i}\bd_{i}+\bb_{\ell-t}= q^{\ell-t}\Phi_{\bu }(0)-\Phi_{\bu}(-t).
\]
\end{lemma}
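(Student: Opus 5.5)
The plan is to compute both $\Phi_{\bu}(0)$ and $\Phi_{\bu}(-t)$ explicitly from the definition of $\varphi_\ell$, rather than telescoping Lemma \ref{l15}(iii) over $t$ steps. The two identities then follow by a linear combination and a division by $q^\ell-1$. Since $\Phi_{\bu}$ is defined on $\Z/\ell\Z$, we have $\Phi_{\bu}(-t)=\Phi_{\bu}(\ell-t)=\varphi_\ell(\delta_\ell^{\ell-t}(\bu))$.

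The key step is to identify $\delta_\ell^{\ell-t}(u_i)$. I claim that
\[
\delta_\ell^{\ell-t}(u_i)=q^{\ell-t}v_i+w_i .
\]
To see this, note first that $\delta_\ell^{\ell-t}(x)$ is the least non-negative residue of $q^{\ell-t}x \bmod (q^\ell-1)$, except that the value $q^\ell-1$ is kept fixed. Next,
\[
q^{\ell-t}u_i=q^\ell w_i+q^{\ell-t}v_i\equiv w_i+q^{\ell-t}v_i \pmod{q^\ell-1}.
\]
Finally, $0\le w_i<q^{\ell-t}$ and $0\le v_i<q^t$, so $0\le w_i+q^{\ell-t}v_i\le q^\ell-1$, with equality exactly when $u_i=q^\ell-1$. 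In that case $\delta_\ell$ fixes $u_i$, so the formula still holds. Equivalently, this is the cyclic shift of base-$q$ digits used in the proof of Lemma \ref{3.1}. This boundary bookkeeping is the only delicate point.

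With this formula, and since $\delta_\ell$ fixes $\bb_\ell$, we get
\[
(q^\ell-1)\Phi_{\bu}(0)=\sum_i (q^t w_i+v_i)\bd_i+\bb_\ell,\qquad (q^\ell-1)\Phi_{\bu}(-t)=\sum_i (q^{\ell-t}v_i+w_i)\bd_i+\bb_\ell .
\]
For the first identity, form $q^t$ times the second expression minus the first. The $w_i$ terms cancel, which leaves
\[
(q^\ell-1)\sum_i v_i\bd_i+(q^t-1)\bb_\ell .
\]
Since $(q^t-1)\bb_\ell=(q^\ell-1)\frac{q^t-1}{q-1}\bb=(q^\ell-1)\bb_t$, dividing by $q^\ell-1$ gives $\sum_i v_i\bd_i+\bb_t=q^t\Phi_{\bu}(-t)-\Phi_{\bu}(0)$.

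For the second identity, form $q^{\ell-t}$ times the first expression minus the second. This time the $v_i$ terms cancel, which leaves
\[
(q^\ell-1)\sum_i w_i\bd_i+(q^{\ell-t}-1)\bb_\ell=(q^\ell-1)\Bigl(\sum_i w_i\bd_i+\bb_{\ell-t}\Bigr).
\]
Dividing by $q^\ell-1$ gives the second identity.
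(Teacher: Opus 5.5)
Your proof is correct, but it takes a different route from the paper. The paper never computes $\delta_\ell^{\ell-t}(\bu)$ in closed form. Instead it expands $v_i=\sum_{k<t}u_{i,k}q^k$ and $w_i=\sum_{k\ge t}u_{i,k}q^{k-t}$ in base-$q$ digits. It then substitutes the one-step relation of Lemma \ref{l15}(iii), $\sum_i u_{i,k}\bd_i=q\Phi_{\bu}(-(k+1))-\Phi_{\bu}(-k)-\bb$, and lets the sums telescope; the $\bb$-terms sum geometrically to $\bb_t$ and $\bb_{\ell-t}$.

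You instead identify the $(\ell-t)$-fold shift in one stroke as the block swap $\delta_\ell^{\ell-t}(u_i)=q^{\ell-t}v_i+w_i$. You then solve a $2\times 2$ linear system in $\Phi_{\bu}(0)$ and $\Phi_{\bu}(-t)$, handling the $\bb$-terms through $(q^t-1)\bb_\ell=(q^\ell-1)\bb_t$ and $(q^{\ell-t}-1)\bb_\ell=(q^\ell-1)\bb_{\ell-t}$.

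Your version is self-contained, since it bypasses Lemma \ref{l15} entirely. It also checks the fixed point $q^\ell-1$ of $\delta_\ell$ explicitly, which the paper only covers implicitly inside the digit computation of Lemma \ref{l15}(i). The paper's version, in exchange, reuses a lemma it had already established and shows the identity as an accumulation of digit-by-digit relations.

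One small remark: you do not actually need $\delta_\ell(\bb_\ell)=\bb_\ell$. By definition $\varphi_\ell$ adds the fixed vector $\bb_\ell$ to $\sum_i u_i\bd_i$, whatever $\bu$ is, so the statement is true but superfluous.
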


\begin{proof} By considering the base-$q$ digits we have 
\[
v_i= \sum_{k=0}^{t-1} u_{i,k}q^k \text{ and } w_{i}=\sum^{\ell-1}_{k=t}u_{i,k}q^{k-t},
\]
so that by lemma \ref{l15}(iii) (and writing  $\Phi_{\bu}(u-r)=\Phi_{\bu}(-r)$)
\begin{align*}
\sum_{i=1}^{n}v_{ i}\bd_{i} &=\sum_{k=0}^{t-1} q^{k}\Bigg(\sum^{n}_{i=1}u_{i,k}\bd_{ i}\Bigg) 
=\sum_{k=0}^{t-1} q^{k} (q \Phi_{\bu}(-(k+1))-\Phi_{\bu}(-k) -\bb  ) \\
&= q^t \Phi_{\bu}(-t) -\Phi_{\bu}(0) - \bb_t
\end{align*}
and
\begin{align*}
 \sum_{i=1}^{n}w_{ i}\bd_{i}& =\sum^{\ell-1}_{k=t}q^{k-t}\Bigg(\sum^{n}_{i=1}u_{i,k}\bd_{ i}\Bigg) 
  =\sum^{\ell-1}_{k=t}q^{k-t} (q \Phi_{\bu}(-(k+1))-\Phi_{\bu}(-k) -\bb  ) \\
  &= q^{\ell-t}\Phi_{\bu }(0)-\Phi_{\bu}(-t)-\bb_{\ell-t}
\end{align*}
and the result follows.
\end{proof}

\begin{proposition} \label{p30}
The set $\{\frac{\sigma(D, q^{\ell}, \bb_{\ell})}{\ell}, \ell \geq 1 \}$ admits a minimum which is attained for some $\bu\in L(D, q^{\ell}, \bb_{\ell})$ with $\ell\leq \prod^{m}_{j=1}(D_{j}^+-D_j^-+1)$ where $\Phi_{\bu}$ is injective.
\end{proposition}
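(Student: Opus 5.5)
Following Blache's argument for $\bb=0$, the plan is a cutting argument. Given $\bu\in L(D,q^{\ell},\bb_{\ell})$ whose map $\Phi_{\bu}$ is not injective, we split $\bu$ into two shorter solutions whose $p$-weights add up to $\sigma_p(\bu)$. Iterating this shows that every ratio $\sigma_p(\bu)/\ell$ is bounded below by a ratio coming from some $\bu'$ with $\Phi_{\bu'}$ injective. By Lemma \ref{Dj} the image of $\Phi_{\bu'}$ lies in a set of size $N:=\prod_{j}(D_j^+-D_j^-+1)$, so injectivity forces $\ell'\le N$. There are only finitely many such $\ell'$, and each $\sigma(D,q^{\ell'},\bb_{\ell'})$ is a minimum over a finite set, so the infimum over all $\ell$ is a minimum attained at some $\ell'\le N$ by a $\bu'$ with $\Phi_{\bu'}$ injective.

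\textbf{Cutting step.} Suppose $\Phi_{\bu}(s)=\Phi_{\bu}(s')$ with $s\ne s'$ in $\Z/\ell\Z$. We replace $\bu$ by $\delta_\ell^{s}(\bu)$. This stays in $L(D,q^\ell,\bb_\ell)$, since $\delta_\ell(\bb_\ell)=\bb_\ell$, and it does not change the $p$-weight, because $\delta_\ell$ cyclically permutes the base-$q$ digits and multiplication by $q=p^f$ cyclically permutes the base-$p$ digits. After this shift we may assume $\Phi_{\bu}(0)=\Phi_{\bu}(-t)$ for some $1\le t\le \ell-1$. We then write $u_i=q^tw_i+v_i$ as in Lemma \ref{lm}. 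That lemma gives
\[
\sum_i v_i\bd_i+\bb_t=(q^t-1)\Phi_{\bu}(0),\qquad \sum_i w_i\bd_i+\bb_{\ell-t}=(q^{\ell-t}-1)\Phi_{\bu}(0).
\]
Since $0\le v_i\le q^t-1$ and $0\le w_i\le q^{\ell-t}-1$, this says $\bv\in L(D,q^t,\bb_t)$ and $\bw\in L(D,q^{\ell-t},\bb_{\ell-t})$. The base-$p$ digits of $u_i$ are exactly the union of those of $v_i$ and of $w_i$, so $\sigma_p(\bu)=\sigma_p(\bv)+\sigma_p(\bw)$. The mediant inequality then gives
\[
\frac{\sigma_p(\bu)}{\ell}\ge\min\Big(\frac{\sigma_p(\bv)}{t},\frac{\sigma_p(\bw)}{\ell-t}\Big).
\]
We keep whichever piece achieves this minimum. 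Its length is strictly smaller than $\ell$, so strong induction on $\ell$ yields the reduction to an injective $\Phi$.

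\textbf{Conclusion.} For each $\ell$, take $\bu$ achieving $\sigma(D,q^\ell,\bb_\ell)$ (when $L(D,q^\ell,\bb_\ell)\neq\emptyset$; empty levels are ignored). The reduction produces $\bu'$ at some level $\ell'\le N$ with $\Phi_{\bu'}$ injective and
\[
\frac{\sigma(D,q^{\ell},\bb_{\ell})}{\ell}\ge\frac{\sigma_p(\bu')}{\ell'}\ge\frac{\sigma(D,q^{\ell'},\bb_{\ell'})}{\ell'}.
\]
So the minimum over $\ell\le N$ is a global minimum. To make sure it is attained by an element with injective $\Phi$, we take a minimizing $\bu$ at a minimizing level of smallest possible length. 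In that case a cut would produce a strictly shorter piece with ratio at most the minimum, hence equal to it, which contradicts minimality. So the minimizer cannot be cut, and its $\Phi_{\bu}$ is injective.

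\textbf{Main obstacle.} The delicate point is checking the boundary conditions of Lemma \ref{lm} after the rotation. The lemma is stated for $\Phi_{\bu}(0)$ and $\Phi_{\bu}(-t)$, so the coincidence must be moved to positions $0$ and $-t$ with $1\le t\le\ell-1$ by the correct power of $\delta_\ell$. We also need $\delta_\ell$ to preserve membership in $L(D,q^\ell,\bb_\ell)$ and the $p$-weight, including the edge value $q^\ell-1$, which $\delta_\ell$ fixes and which has all digits equal to $q-1$. Finally, the additivity $\sigma_p(\bu)=\sigma_p(\bv)+\sigma_p(\bw)$ needs care when some $v_i$ equals $q^t-1$ or some $w_i$ equals $q^{\ell-t}-1$.
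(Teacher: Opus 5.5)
Your proposal is correct and follows the paper's proof essentially step for step: you rotate by a power of $\delta_\ell$ so that $\Phi_{\bu}$ takes equal values at two positions, cut $\bu$ via Lemma \ref{lm} into solutions at levels $t$ and $\ell-t$ whose $p$-weights add, apply the mediant inequality, and use Lemma \ref{Dj} to bound the length of injective pieces by $\prod_j(D_j^+-D_j^-+1)$. The only differences are cosmetic. You place the coincidence at positions $0,-t$ rather than $0,t$, which just swaps the roles of $\bv$ and $\bw$, and you secure an injective minimizer by taking a minimizing level of smallest length, where the paper minimizes over the finite set $\mathcal L$ of pairs with injective $\Phi$; the edge cases you flag (the fixed value $q^\ell-1$, digits equal to $q-1$) cause no trouble.
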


\begin{proof} Let $\mathcal L$ be the set of pairs $(\bu,\ell)$ for which $ \bu\in L(D, q^{\ell}, \bb_{\ell})$ and
$\Phi_{\bu}:\Z / \ell\Z \longrightarrow \prod^{m}_{j=1}\{D_{j}^-,\ldots, D_{j}^+\} $ is injective.

For any $ \bu\in L(D, q^{\ell}, \bb_{\ell})$  with $(\bu,\ell)\not\in \mathcal L$, there exist  integers  $0\leq k_{1}< k_{2}\leq \ell-1 $ such that $\Phi_{\bu}(k_{1})= \Phi_{\bu}(k_{2})$. Replacing $ \bu $ by $\delta^{k_{1}}_{\ell}(\bu)$ and letting $ t=k_{2}-k_{1} $, we obtain an integer $t\in [1,\ell-1]$ for which
\[
\Phi_{\bu}(0)=\Phi_{\bu}(t).
\]

Now $ \bb_{\ell} =\frac{q^{\ell}-1}{q-1}\bb=\frac{q^{t}-1}{q-1}\bb q^{\ell-t}+ \frac{q^{\ell-t}-1}{q-1}\bb$ so that
 $ \bb_{\ell} = q^{\ell-t}\bb_{t}+ \bb_{\ell-t}$. Replacing $t$ by $\ell-t$ in Lemma \ref{lm} we obtain
\[
\sum_{i=1}^{n}v_{i}\bd_{i}+ \bb_{\ell-t}= q^{\ell-t}\Phi_{\bu}(t)- \Phi_{\bu}(0) =(q^{\ell-t}-1) \Phi_{\bu}(0)
\]
and similarly
\[
\sum_{i=1}^{n}w_{i}\bd_{i}+\bb_{t}= (q^{t}-1)\Phi_{\bu }(0).
\]
Therefore $\bv=(v_{1},\ldots, v_{n}) \in L(D, q ^{\ell-t}, \bb_{\ell-t})$ and  $\bw =(w_{1},\ldots, w_{n})\in L(D, q^{t}, \bb_{t})$ and $ \sigma_{p}(\bu) = \sigma_{p}(\bv)+\sigma_{p}(\bw)$ by definition, so that 
\[
\frac{ \sigma_{p}(\bu) }{\ell} =  \Big(1-\frac{t}{\ell}\Big) \frac{ \sigma_{p}(\bv) }{\ell-t} + 
\frac{t}{\ell} \frac{ \sigma_{p}(\bw) }{t} \geq  \min\Bigg\{ \frac{ \sigma_{p}(\bv) }{\ell-t} , \frac{ \sigma_{p}(\bw) }{t} \Bigg\} .
\]

Now if $(\bv,\ell-t)\not\in \mathcal L$ or $(\bw,t)\not\in \mathcal L$ then we can repeat the process, and so
\[
\frac{ \sigma_{p}(\bu) }{\ell} \geq \min_{(\bv,h) \in \mathcal L} \frac{ \sigma_{p}(\bv) }{h} =
\min_{h: \exists (\bv,h) \in \mathcal L}   \frac{ \sigma(D ,q^{h},\bb_{h}) }{h}.
\] 
 
Now if $\ell>\prod^{m}_{j=1}( D_{j}^+-D_j^-+1)$ then $ \Phi_{\bu}:\Z / \ell\Z \longrightarrow \prod^{m}_{j=1}\{D_{j}^-,\ldots, D_{j}^+\} $ cannot be injective since $\ell$ is larger than the image
and so $(u,\ell)\not\in \mathcal L$. The result follows
 \end{proof}

\begin{definition} \label{d1}
 For $\bu \in L(D, q^{\ell}, \bb_{\ell})$ define the rational number 
 \[
 s_{p}(\bu):= \frac{\sigma_{p}(\bu)}{f\ell(p-1)}
 \]
(since $v_{\pi}(x)= f(p-1) v_{q}(x)$ for all $ x \in \Q_{p}( \zeta_{q-1})$
and let
\[
s_{p}(D,\bb):=\min \Bigg \{\frac{\sigma(D, q^{\ell}, \bb_{\ell})}{f\ell(p -1)},\ell\geq 1 \Bigg \}.
\]
\end{definition}
 
\begin{proposition}  \label{prop}
We have
\[
 v_{q}( S_{\ell}(F,\bb)) \geq \ell s_{p}(D,\bb) \text{ for all } \ell\geq 1.
 \]
\end{proposition}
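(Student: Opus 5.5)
The bound follows from congruence \eqref{S} combined with the definition of $s_{p}(D,\bb)$ as a minimum over all $\ell$; Proposition \ref{p30} guarantees that this minimum exists.

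Fix $\ell\geq 1$. If $L(D,q^{\ell},\bb_{\ell})=\emptyset$, the sum $S_{\ell}(F,\bb)$ vanishes: expanding $\psi\circ\Tr$ through the Dwork splitting function and summing $\bx^{\bu\cdot D+\bb_{\ell}}$ over $(\F_{q^{\ell}}^{\times})^{m}$ kills every term with no exponent $\equiv 0 \bmod (q^{\ell}-1)$. In that case the inequality holds trivially, since $v_q(0)=\infty$. Otherwise \eqref{S} gives
\[
S_{\ell}(F,\bb)\equiv c\,\pi^{\sigma(D,q^{\ell},\bb_{\ell})} \mod \pi^{\sigma(D,q^{\ell},\bb_{\ell})+1},
\]
where
\[
c=\sum_{\bu\in G}\prod_{i}\omega(a_i)^{u_i}/\rho_q(u_i).
\]
Each $\rho_q(u_i)$ is a product of factorials of $p$-digits, each at most $p-1$, so these are $p$-adic units and $c$ is $p$-integral. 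Hence $v_{\pi}(S_{\ell}(F,\bb))\geq \sigma(D,q^{\ell},\bb_{\ell})$. The underlying Adolphson–Sperber estimate [AS14, 6.3] in fact gives every term of the expansion valuation at least its $p$-weight, so this lower bound holds even if $c$ vanishes modulo $\pi$.

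Next convert to $q$-adic valuation using $v_{\pi}(x)=f(p-1)v_q(x)$, which gives
\[
v_q(S_{\ell}(F,\bb))\geq \frac{\sigma(D,q^{\ell},\bb_{\ell})}{f(p-1)} = \ell\cdot\frac{\sigma(D,q^{\ell},\bb_{\ell})}{f\ell(p-1)}.
\]
By Definition \ref{d1}, $s_{p}(D,\bb)$ is the minimum of these quotients over all $\ell$, so the right side is at least $\ell s_{p}(D,\bb)$, and the statement follows.

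The main obstacle is justifying the first inequality rigorously, namely that \eqref{S} gives a genuine lower bound on $v_{\pi}$ and not merely a leading-term congruence. Everything else is bookkeeping. I would first recall the Dwork–Stickelberger expansion $\psi(\Tr x)=\prod_{j}\theta(\omega(x)^{p^j})$ with $\theta(t)=\sum \lambda_k t^k$ and $v_\pi(\lambda_k)\geq \sigma_p$-type bounds. I would then invoke the orthogonality computation of [AS14, 6.3], which shows that only $\bu\in L(D,q^{\ell},\bb_{\ell})$ contribute, each with $\pi$-adic valuation at least $\sigma_p(\bu)\geq\sigma(D,q^{\ell},\bb_{\ell})$. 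Since the paper takes \eqref{S} as given from that reference, I would cite it there.
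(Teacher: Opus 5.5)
Your proof is correct and follows the same route as the paper's: congruence \eqref{S}, whose leading coefficient is $p$-integral, gives $v_\pi(S_\ell(F,\bb))\geq\sigma(D,q^\ell,\bb_\ell)$; dividing by $f(p-1)$ and comparing with the minimum defining $s_p(D,\bb)$ (which exists by Proposition \ref{p30}) then yields the bound. You also handle two points the paper leaves implicit, namely the case $L(D,q^\ell,\bb_\ell)=\emptyset$ and the integrality of the coefficient; a congruence modulo $\pi^{\sigma+1}$ with an integral coefficient is already a genuine lower bound, so the ``main obstacle'' you flag is not really an obstacle.
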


\begin{proof}
We deduce from the congruence \eqref{S} and the proposition \ref{p30} that for all $\ell$, 
$$v_{\pi}( S_{\ell}(F, \bb))\geq \sigma(D, q^{\ell}, \bb_{\ell}) \geq f\ell(p- 1)s_{p}(D, \bb).$$
which implies that

$$ v_{q}( S_{\ell}(F,\bb))= \frac{1}{f(p-1)}v_{\pi}( S_{\ell}(F, \bb))\geq \ell s_{p}(D, \bb). \qedhere$$ 
\end{proof}

%%%%%%%%%%%%%%%%%%%%%
\section{A bound for the valuations of the reciprocal poles and roots of the $L$-function}

We again assume that all the variables appear in the Laurent polynomial $F$. 
We define $\mu(F, \bb)$ to be the minimum of the 
$v_{q}(\alpha_{i}(F, \bb))$ and $ v_{q}(\beta_{j}(F,\bb)) $, varying over the zeros and poles of 
\[
L(F,\bb, T)=\exp \Bigg (\sum_{\ell\geq 1}S_{\ell}(F,\bb)\frac{T^{\ell}}{\ell }\Bigg)  =\frac{\prod^{s}_{i=1}(1-\beta_{i}(F, \bb)T)}{\prod^{t }_{j=1}(1-\alpha_{j}(F, \bb)T)}.
 \]
Were $ T $ is a formal variable.

\begin{theorem} \label{pL1}
We have $\mu(F, \bb) \geq s_{p}(D, \bb)$.
\end{theorem}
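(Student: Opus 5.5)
The plan is to deduce the bound on the reciprocal zeros and poles from the uniform bound of Proposition \ref{prop}, $v_q(S_\ell(F,\bb))\geq \ell s_p(D,\bb)$ for all $\ell\geq 1$, by a generating-function argument over $\C_p$. Write $s=s_p(D,\bb)$ and fix $\lambda\in\C_p$ with $v_q(\lambda)=s$; for instance $\lambda=q^{s}$ after choosing a suitable root, which exists since $s\in\Q$. Rescale $T$ by setting
\[
\tilde L(T):=L(F,\bb,T/\lambda)=\exp\Bigg(\sum_{\ell\geq1}\frac{S_\ell(F,\bb)}{\lambda^\ell}\frac{T^\ell}{\ell}\Bigg)=\frac{\prod_{i}(1-\beta_i\lambda^{-1}T)}{\prod_j(1-\alpha_j\lambda^{-1}T)}.
\]
The coefficients $c_\ell:=S_\ell(F,\bb)/\lambda^\ell$ then satisfy $v_p(c_\ell)\geq 0$. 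The goal becomes showing that every reciprocal zero or pole $\gamma/\lambda$ of $\tilde L$ has $v_p\geq 0$.

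First, after cancelling common factors, identify $\sum_{\ell}c_\ell T^\ell$ with $T\frac{d}{dT}\log\tilde L=\sum_j\frac{(\alpha_j/\lambda)T}{1-(\alpha_j/\lambda)T}-\sum_i\frac{(\beta_i/\lambda)T}{1-(\beta_i/\lambda)T}$. Comparing coefficients gives
\[
c_\ell=\sum_j\gamma_j^\ell-\sum_i\delta_i^\ell,
\]
where $\gamma_j=\alpha_j/\lambda$ and $\delta_i=\beta_i/\lambda$, and the zeros and poles can be taken distinct after cancellation. The power series $g(T)=\sum_\ell c_\ell T^\ell$ has integral coefficients, so it converges on the open unit disc $\{|T|_p<1\}$.

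Now suppose some $\gamma\in\{\gamma_j,\delta_i\}$ has $v_p(\gamma)<0$, and choose one of minimal valuation. Then $T_0=1/\gamma$ satisfies $|T_0|_p<1$, and the rational function $\sum_j\frac{\gamma_jT}{1-\gamma_jT}-\sum_i\frac{\delta_iT}{1-\delta_iT}$ has a genuine simple pole at $T_0$ with residue equal to $\pm$ the multiplicity, which is nonzero in characteristic $0$. On the other hand, this rational function coincides with $g$ as formal power series. On a small disc around $0$ where both converge they agree as analytic functions, and since $g$ is analytic on the whole open unit disc, the rational function has an analytic continuation there. By uniqueness of continuation for rational functions (equivalently, the Weierstrass preparation theorem), it cannot have a pole at $T_0$. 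This contradiction shows $v_p(\gamma)\geq0$ for all $\gamma$, so $v_q(\alpha_j),v_q(\beta_i)\geq v_q(\lambda)=s$, that is, $\mu(F,\bb)\geq s_p(D,\bb)$.

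I expect the main obstacle to be making the analytic-continuation step rigorous $p$-adically. One has to check that a rational function equal to a power series convergent on $|T|<1$ has no poles there. The cleanest route is this: if $g=P/Q$ with $P$ and $Q$ coprime polynomials, then $Q g=P$. Since $Q(T_0)=0$ and $g(T_0)$ converges, evaluating gives $P(T_0)=0$, contradicting coprimality. A second point to handle is working in a finite extension of $\Q_p$ that contains $\lambda$ and all the $\alpha_j,\beta_i$. Taking the logarithmic derivative avoids any need to control the exponential's convergence, which would otherwise be delicate.
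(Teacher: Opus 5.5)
Your proposal is correct and follows essentially the paper's route: take the logarithmic derivative of $L(F,\bb,T)$, use Proposition \ref{prop} to see that $\sum_{\ell}S_\ell(F,\bb)T^\ell$ converges on $|T|_q<q^{s_p(D,\bb)}$, and conclude that the reduced rational function has no poles there; your rescaling by $\lambda$ is only cosmetic. Your $Qg=P$ evaluation argument states explicitly, and in the right direction, the step the paper's write-up leaves implicit (convergence of the series on the larger disc forces every $1/\alpha_j$, $1/\beta_i$ outside it); the only slip is that the residue at $T_0$ is $\pm mT_0$ rather than $\pm m$, which is still nonzero.
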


\begin{proof}
Now
\[
\sum_{\ell\geq 1} S_{\ell}(F, \bb)T^{\ell- 1} = \frac{d}{dT}(\ln(L(F,\bb,T)))=   -\sum^{s}_{i=1}\frac{\beta_{i}(F,\bb )}{1-\beta_{i}(F,\bb)T}+\sum^{t}_{j=1}\frac{\alpha_{j}(F,\bb)}{1-\alpha_{j}(F,\bb)T}.
\]
To study convergence of the series $\sum_{\ell\geq 1} S_{\ell}(F, \bb)T^{\ell- 1}$, $ T $ is considerd a complex(or $p$-adic) variable.
 
The right-hand side converges if and only if 
\[ 
| T |_{q}< \min_{i,j} \{| \alpha_{j}(F,\bb)|^{-1}_{q}, | \beta_{i}(F,\bb)|^{-1}_{q} \}=: q^{\mu(F, \bb) }.
\]
since then each $| \alpha_{i}(F,\bb)T |_{q} < 1$ and each $| \beta_{j}(F,\bb)T |_{q} < 1$. Therefore the left-hand side converges. Now $|S_{\ell}(F, \bb)|_q\leq q^{- \ell s_{p}(D,\bb)}$ by Proposition \ref{prop}, and so
\[
|S_{\ell}(F, \bb)T^\ell |_q \leq (|T|_qq^{- s_{p}(D,\bb)})^\ell   \leq ( q^{\mu(F, \bb)- s_{p}(D,\bb)})^\ell 
\]
which implies the result since the  left-hand side converges. 
\end{proof}

\begin{lemma} \label{L2}
There exists an extension $\F_{q^{\ell}}$ of $\F_{q}$ and a polynomial $F $ having its exponents in $D$ and its coefficients in $\F_{q^ {\ell}}$ such that  
\[
\min_{i,j}\{ v_{q^\ell}(\alpha_{j}(F,\bb)), v_{q^\ell}(\beta_{i}(F, \bb)) \} = s_{p}(D, \bb).
\]
\end{lemma}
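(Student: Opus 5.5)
Following Blache's approach in \cite{B12}, we first choose the extension degree from Proposition \ref{p30}, then choose coefficients in $\F_{q^\ell}$ so that the leading term of \eqref{S} does not vanish, and finally argue that this single sum forces some reciprocal zero or pole to have small valuation.

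\textbf{Choice of $\ell$.} By Proposition \ref{p30} there is an $\ell_0$ with $s_p(D,\bb)=\sigma(D,q^{\ell_0},\bb_{\ell_0})/(f\ell_0(p-1))$. We set $Q=q^{\ell_0}$ and work over $\F_Q$, with the multiplicative character $\omega\circ \Nr_{\F_Q/\F_q}$. This character corresponds to the exponent vector $\bb_{\ell_0}$ over $\F_Q$, and to $\bb_{k\ell_0}$ over $\F_{Q^k}$. The $L$-function of $F$ over $\F_Q$ has reciprocal zeros and poles given by the $\ell_0$-th powers of the $\alpha_j,\beta_i$ when $F$ has coefficients in $\F_q$. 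For general coefficients in $\F_Q$ we use the $L$-function over $\F_Q$ directly. In either case its $k$-th sum is $S_{k\ell_0}$ and $v_{Q}=v_q/\ell_0$. Theorem \ref{pL1}, applied over $\F_Q$ (which is legitimate because the density attached to $Q$ is $\min_k \sigma(D,q^{k\ell_0},\bb_{k\ell_0})/(fk\ell_0(p-1))\ge s_p(D,\bb)$ with equality at $k=1$), gives $\min v_Q\ge s_p(D,\bb)$. It therefore suffices to produce $F$ with one reciprocal zero or pole of $Q$-adic valuation at most $s_p(D,\bb)$.

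\textbf{Nonvanishing leading term.} By \eqref{S} over $\F_Q$ (that is, $k=1$), $S_{1}$ over $\F_Q$ equals $\pi^{\sigma}$ times $P(\omega(a_1),\dots,\omega(a_n))$ modulo $\pi^{\sigma+1}$, where $\sigma=\sigma(D,Q,\bb_{\ell_0})$ and
\[
P(y_1,\dots,y_n)=\sum_{\bu\in G}\prod_{i}\frac{y_i^{u_i}}{\rho_q(u_i)}.
\]
Since $u_i<Q$, the monomials $\prod_i y_i^{u_i}$ are distinct, and each $\rho$ is a product of factorials of digits $<p$, hence a $p$-adic unit. So $P$ reduces to a nonzero polynomial mod $\pi$ whose degree in each variable is $\le Q-1$. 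A nonzero polynomial of degree $<Q$ in each variable cannot vanish on all of $\F_Q^n$, so it has a nonzero point. Here we will need care: we need a point in $(\F_Q^\times)^n$, or else we must allow some $a_i=0$, which shrinks $D$. This is the main obstacle. We plan to use exponents $u_i\le Q-1$ and to treat $y_i^{Q-1}$ versus $y_i^0$ as distinct, which is where the $\bb\neq 0$ hypothesis and the minimality of $\sigma$ should help. If needed, we replace $Q$ by a further power and appeal to Lemma \ref{3.1} and the multiplicative structure of minimal solutions. Choosing such $a_i$ gives $v_\pi(S_1)=\sigma$, that is, $v_Q(S_1)=s_p(D,\bb)$.

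\textbf{Conclusion.} Over $\F_Q$, $S_1=\sum_j\alpha_j-\sum_i\beta_i$. If every reciprocal zero and pole had $v_Q>s_p(D,\bb)$, the ultrametric inequality would give $v_Q(S_1)>s_p(D,\bb)$, a contradiction. Combined with the lower bound from Theorem \ref{pL1}, the minimum equals $s_p(D,\bb)$.
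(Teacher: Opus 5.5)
Your route is the paper's route. You choose $\ell$ by Proposition~\ref{p30} and make the leading coefficient of \eqref{S} nonzero over $\F_{q^\ell}$; the paper does this by applying Theorem~\ref{t28}, i.e.\ Moreno's lemma, with $q$ replaced by $q^\ell$. You then squeeze $v_\pi(S)$ between the ultrametric inequality and Theorem~\ref{pL1}. Your bookkeeping over $\F_Q$ is cleaner than the paper's: you check that the density attached to $(Q,\bb_{\ell_0})$ equals $s_p(D,\bb)$, and you have the correct sign $S_1=\sum_j\alpha_j-\sum_i\beta_i$.

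However, the proposal is not complete. The step you call the main obstacle, finding $a\in(\F_Q^\times)^n$ with $\bar P(a)\neq 0$, is left as a plan. The remedies you mention (the hypothesis $\bb\neq 0$, passing to a further power of $Q$, Lemma~\ref{3.1}) are not what is needed. Also, on the torus $y_i^{Q-1}$ and $y_i^0$ are the \emph{same} function, so you cannot ``treat them as distinct''. What you must show is that the exponent $Q-1$ never occurs.

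The missing observation is short: no $\bu\in G$ has a coordinate equal to $Q-1$. If $u_k=Q-1$, then $u_k\bd_k\equiv 0\pmod{Q-1}$. Setting $u_k=0$ therefore keeps the vector in $L(D,Q,\bb_{\ell_0})$ while lowering $\sigma_p$ by $\sigma_p(Q-1)=f\ell_0(p-1)>0$, contradicting minimality. Hence every exponent of $\bar P(y)=\sum_{\bu\in G}c_{\bu}y^{\bu}$ lies in $\{0,\ldots,Q-2\}$, where $c_{\bu}$ is the reduction mod $\pi$ of $\prod_i\rho_q(u_i)^{-1}$. So distinct $\bu\in G$ give distinct characters $y\mapsto y^{\bu}$ of $(\F_Q^\times)^n$. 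Each $c_{\bu}$ is nonzero because the $\rho_q(u_i)$ are $p$-adic units. Orthogonality then gives $\sum_{a\in(\F_Q^\times)^n}\bar P(a)a^{-\bu}=(-1)^n c_{\bu}\neq 0$ for each $\bu\in G$. So $\bar P$ does not vanish identically on the torus. Taking such an $a$ makes every $a_i$ nonzero, so $F$ has support exactly $D$, and this finishes your argument. The paper itself only cites Moreno's lemma on all of $\F_q^n$ and never addresses this point, so your concern was legitimate; it just has to be resolved rather than deferred.
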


To prove this lemma, we need the following result

\begin{theorem} \label{t28} 
Let $ H $ be the vector subspace of 
$ \F_{q}[X_{1}, X^{-1}_{1},\ldots,X_{m}, X^{-1}_{m}]$
 generated by monomials
$ \bX^{\bd_{1}},\ldots,\bX^{\bd_{n}}$ with each $\bd_{i}\in \Z^m$.
Let $ \bb\in \{0,\ldots, q-2\}^{m}$ such that $ \bb\neq 0$.
There exists a  polynomial $ F $ in $ H $ such that $ v_\pi(S_{1}(F,\bb))=\sigma(D, q,\bb)$.
\end{theorem}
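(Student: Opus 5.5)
Because congruence \eqref{S} is already available, the problem reduces to one nonvanishing statement. For $\ell=1$, with the coefficients of $F$ regarded as variables $a_1,\dots,a_n\in\F_q$, we have
\[
S_{1}(F,\bb)\equiv \pi^{\sigma(D,q,\bb)}\, P(a_1,\dots,a_n) \mod \pi^{\sigma(D,q,\bb)+1},
\]
where
\[
P(a_1,\dots,a_n)=\sum_{\bu\in G}\prod_{i=1}^{n}\frac{\omega(a_i)^{u_i}}{\rho_q(u_i)}
\]
and $G$ is the set of minimal-weight solutions in $L(D,q,\bb)$. Since $\bb\neq 0$, the set $L(D,q,\bb)$ is nonempty whenever $\sigma(D,q,\bb)$ is defined, so $G\neq\emptyset$. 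It therefore suffices to find $a\in\F_q^n$ for which $P(a)\not\equiv 0 \mod \pi$. Then $v_\pi(S_1(F,\bb))=\sigma(D,q,\bb)$ exactly for $F=\sum a_i\bX^{\bd_i}\in H$.

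The first step is to reduce modulo $\pi$. The residue field of $\Z_p[\zeta_{q-1},\zeta_p]$ is $\F_q$, and $\omega(a)$ reduces to $a$. Each $\rho_q(u_i)$ is a product of factorials of base-$p$ digits, all smaller than $p$, so it is a $p$-adic unit. The reduction is then
\[
\overline{P}(a)=\sum_{\bu\in G} c_{\bu}\, a_1^{u_1}\cdots a_n^{u_n}\in \F_q[a_1,\dots,a_n], \qquad c_{\bu}=\prod_i \rho_q(u_i)^{-1}\bmod p\neq 0 .
\]
Here we use the convention $0^0=1$, consistent with $\omega(0)^0=1$ in \eqref{S}; since $u_i\in\{0,\dots,q-1\}$, an exponent $u_i=0$ simply means that $a_i$ does not appear in that monomial.

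The second step is to show that $\overline{P}$ is not the zero function on $\F_q^n$. Distinct $\bu\in G$ give distinct monomials $\prod a_i^{u_i}$, but the exponents $u_i$ range over $0,\dots,q-1$, and $a^{q-1}$ and $a^0$ agree as functions on $\F_q^\times$. So we should evaluate on all of $\F_q$, not only on $\F_q^\times$. A polynomial in which every variable has degree at most $q-1$ and which has a nonzero coefficient is a nonzero function on $\F_q^n$: this is the standard fact that reduced polynomials correspond bijectively to functions $\F_q^n\to\F_q$, proved by induction on $n$ using the fact that a one-variable polynomial of degree at most $q-1$ with at most $q-1$ roots is nonzero somewhere. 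The monomials are pairwise distinct and all coefficients $c_{\bu}$ are nonzero, so $\overline{P}(a)\neq0$ for some $a\in\F_q^n$.

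The main obstacle I expect is verifying that \eqref{S} remains valid when some coefficients $a_i=0$, that is, that the Adolphson--Sperber congruence does not presuppose $a_i\neq 0$, with the convention $\omega(0)^0=1$. If it does presuppose nonzero coefficients, I would instead restrict to $a\in(\F_q^\times)^n$. Then $\overline{P}$ must be viewed in $\F_q[a]/(a_i^{q-1}-1)$, where the exponents $0$ and $q-1$ could collide. To rule out cancellation between $\bu$ and $\bu'$ that differ only by swapping $0\leftrightarrow q-1$ in some coordinates, I would use minimality of the weight: $\sigma_p(q-1)=f(p-1)>0=\sigma_p(0)$, so two such vectors cannot both lie in $G$ unless they coincide. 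The monomials therefore stay distinct after reduction, and the same counting argument gives some $a$ with $\overline{P}(a)\neq 0$.
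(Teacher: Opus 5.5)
Your proposal is correct and follows the paper's proof: reduce the congruence \eqref{S} at $\ell=1$ modulo $\pi$ to a polynomial in $a_1,\dots,a_n$ of degree at most $q-1$ in each variable with nonzero monomials, and conclude that it does not vanish identically on $\F_q^n$. The paper cites Moreno's Lemma 10 for this last step. You instead prove it by the standard induction and supply details the paper leaves implicit: the $\rho_q(u_i)$ are units, the monomials are distinct, and the convention $\omega(0)^0=1$ makes zero coefficients harmless.
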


\begin{proof}
In   \eqref{S} we have
\[
 S_{\ell}(F,\bb_{\ell})/\pi^{\sigma(D,q^{\ell},\bb_{\ell})}\equiv Q_{\bb}( a_{1},\ldots,a_{n}):= \sum_{\bu \in G} \prod^{n}_{i=1}\Bigg(\frac{\omega(a_{i})^{u_{i}}}{\rho_{q}(u_{i})}\Bigg)\mod\pi.
 \]
For $\ell=1$, $ Q_{\bb}(a_{1},\ldots, a_{n})$ 
is a polynomial in $a_{i}$ whose degree in each variable does not exceed $q-1$. It therefore satisfies the hypotheses of Moreno [\cite{OM04}, lemme 10], and has at least one non-zero monomial, so it does not cancel everywhere.
\end{proof}

We now establish the proof of lemma \ref{L2}.
 
\begin{proof}  
We select $\ell$ such that $\sigma(D, q^\ell, \bb_{\ell})=s_{p}(D, \bb)\ell f(p-1)$ (which exists by proposition \ref{p30}). By theorem \ref{t28}, we select a polynomial $F$ for which $v_{\pi}(S_{\ell}(F,\bb_{\ell}))=\sigma(D , q^{\ell},\bb_{\ell})=s_{p}(D, \bb)\ell f(p-1)$. Since 
\[
S_{\ell}(F,\bb_{\ell})=\sum_i\beta_i(F, \bb)-\sum_j\alpha_j(F,\bb),
\]
we have
\begin{align*}
s_{p}(D, \bb)\ell f(p-1)=v_{\pi}(S_{\ell}(F,\bb_{\ell}))& \geq \min_{ i,j}\{v_{\pi}(\alpha_{j}(F,\bb)), v_{\pi}(\beta_{i}(F, \bb))\}\\
           &\geq s_{p}(D, \bb)\ell f(p-1)
\end{align*}
by proposition \ref{pL1}, and hence we obtain ence the equality sought. 
\end{proof}
  
  \section{Lower bounds in practice}

Let  $D\subset  \ZZ$. For any   $\bu\in L(D, q^{\ell}, \bb_{\ell})$ we have  
\[
f\ell(p-1)\leq \sigma_p\left(\sum^{n}_{i=1} u_{i}\bd_{i}+\bb_{\ell}\right) \leq \sum^{n}_{i=1} \sigma_p(u_{i})\sigma_p(\bd_{i})+\sigma_p(\bb_{\ell})
\]
by (\ref{L}) and Moreno [\cite{OM04}, Proposition 11]. 
Hence, we deduce that
\[
 \sigma_{p}(\bu)=\sum^{n}_{i=1} \sigma_p(u_{i})\geq \frac{f\ell(p-1)-\sigma_p(\bb_{\ell})}{\max_i\sigma_p(\bd_{i})}=\ell\frac{f(p-1)-\sigma_p(\bb)}{\max_i\sigma_p(\bd_{i})}
\]
and so $\sigma(D, q^{\ell}, \bb_{\ell})\geq \ell\frac{f(p-1)-\sigma_p(\bb)}{\max_i\sigma_p(\bd_{i})}$ and
\[
s_p(D,\bb)\geq \frac{f(p-1)-\sigma_p(\bb)}{f(p-1)\max_i\sigma_p(\bd_{i})}
\]
(and $ v_{q}(S_{\ell}(D,b))\geq \ell  \frac{f(p-1)-\sigma_p(\bb)}{f(p-1)\max_i\sigma_p(\bd_{i})}$ by 
proposition \ref{prop}).

\begin{example} Let  $q=p, D=\{1, \ldots, d \}$ and  $b\in \{0,\ldots,p-2\}$  so that $\sigma_p(\bb)=b$ and
$\bd_{i}=i.$ And so the above gives the lower bound $ \sigma_{p}(\bu)\geq \ell \frac{p-1-b}{d}$ as $\max_i\sigma_p(\bd_{i})=d$. 
If $d$ divides $p-1-b$, say  $p-1-b=ds$ then this yields $ \sigma_{p}(\bu)\geq s$ when $ \ell=1$.
But we attain this bound from   the solution $u_1=\cdots=u_{d-1}=0, u_{d}=s$.
Note that these solutions are obtained by solving the system of congruences  (\ref{eq; congruence}) for $ m=1. $ 
\end{example}

\begin{example} Let $D=\{1,\ldots,p^a-1\}$, $b=p^c-1$, $f=ka+c$, for given $a,c,k\in\Z_{\geq 1}$ so that
  $\max_i\sigma_p(\bd_{i})=a(p-1)$, $\sigma_p(b)=c(p-1)$. When $\ell=1$  the above lower bound becomes
 $\sigma_{p}(\bu)\geq  { \frac{f(p-1)-\sigma_p(\bb)}{\max_i\sigma_p(\bd_{i})}} = \frac{f-c}{a}=k$. But we attain this bound from   the solution $u_1=\ldots=u_{p^a-2}=0, u_{p^a-1}=p^c\frac{p^{ka}-1}{p^a-1}=p^{c+a}+p^{c+2a}+\dots+p^{c+(k-1)a}$, since
  $\sigma_{p}(\bu)=\sigma_{p}(u_{p^a-1})=k$ wich are obtained by solving  (\ref{eq; congruence}) for $ m=1 .$
\end{example}

\bibliographystyle{amsplain}

\end{document}